\newcommand{\N}{\mathbb N}
\newcommand{\ff}{{\mathbf f}}
\newcommand{\be}{{\mathbf e}}
\newcommand{\bk}{{\mathbf k}}
\newcommand{\bc}{{\mathbf c}}
\newcommand{\bz}{{\mathbf 0}}
\newcommand{\tbe}{{\tilde\be}}
\newcommand{\bx}{{\mathbf x}}
\newcommand {\SC} {{\mathbb C}}
\newcommand {\SN} {{\mathbb N}}
\newcommand {\SR} {{\mathbb R}}
\newcommand {\SX} {{\mathbb X}}
\newcommand {\SY} {{\mathbb Y}}
\newcommand {\SZ} {{\mathbb Z}}
\newcommand {\al} {{\alpha}}
\newcommand {\dt} {{\delta}}
\newcommand {\e} {{\varepsilon}}
\newcommand {\ga} {{\gamma}}
\newcommand {\Ga} {{\Gamma}}
\newcommand {\la} {{\lambda}}
\newcommand {\La} {{\Lambda}}
\newcommand {\tK} {{\widetilde K}}
\newcommand {\tsigma} {{\tilde\sigma}}
\def\supp{\mathop{\rm supp}}
\numberwithin{equation}{section}
\newtheorem{theorem}{Theorem}[section]
\newtheorem{lemma}[theorem]{Lemma}
\newtheorem{Remark}[theorem]{Remark}
\newtheorem{proposition}[theorem]{Proposition}
\newtheorem{example}[theorem]{Example}
\newcommand {\Proofof}[1] {\noindent{\bf P{\footnotesize\bf ROOF} of {#1}: } \ }
\newcommand {\ProofEnd} {
             \begin{flushright} \vskip -0.2in $\Box$ \end{flushright}}
\newcommand{\Ba}[1]{\begin{array}{#1}}
\newcommand{\Ea}{\end{array}}
\newcommand{\Be}{\begin{equation}}
\newcommand{\Ee}{\end{equation}}
\newcommand{\Bea}{\begin{eqnarray}}
\newcommand{\Eea}{\end{eqnarray}}
\newcommand{\Beas}{\begin{eqnarray*}}
\newcommand{\Eeas}{\end{eqnarray*}}
\newcommand{\Benu}{\begin{enumerate}}
\newcommand{\Eenu}{\end{enumerate}}
\newcommand{\Bi}{\begin{itemize}}
\newcommand{\Ei}{\end{itemize}}
\newcommand{\BR}{\begin{Remark} \em}
\newcommand{\ER}{\end{Remark}}
\newcommand{\BE}{\begin{example} \em}
\newcommand{\EE}{\end{example}}
\newcommand {\Ts} {\textstyle}
\newcommand {\mand} {{\quad\mbox{and}\quad}}
\renewcommand {\mid} {{\,\,\,\colon\,\,\,}}
\newcommand{\bline}{{\bigskip

\noindent}}
\newcommand{\sline}{{\smallskip

\noindent}}
\newcommand {\bone} {{\bf 1}}
\newcommand {\tA} {{\widetilde A}}
\newcommand {\AG} {{A\setminus\Ga}}
\newcommand {\GA} {{\Ga\setminus A}}
\newcommand {\PG} {{P\setminus\Ga}}
\newcommand {\GP} {{\Ga\setminus P}}
\newcommand {\PB} {{P\setminus B}}
\newcommand {\koj} {{k_1\mbox{\tiny$(j)$}}}
\newcommand {\tG} {{\widetilde \Ga}}
\renewcommand {\span} {\mbox{\rm span}\,}
\newcounter{reg}
\newcounter{regTO}
\begin{document}

\title[Lebesgue-type inequalities for
quasi-greedy bases]{Lebesgue-type inequalities for quasi-greedy
bases}

\author{Gustavo Garrig\'os}

\address{Gustavo Garrig\'os
\\
Departamento de Matem\'aticas
\\
Universidad de Murcia
\\
30100 Murcia, Spain} \email{gustavo.garrigos@um.es}

\author{Eugenio Hern\'andez}

\address{Eugenio Hern\'andez
\\
Departamento de Matem\'aticas
\\
Universidad Aut\'onoma de Madrid
\\
28049, Madrid, Spain} \email{eugenio.hernandez@uam.es}

\author{Timur Oikhberg}

\address{Timur Oikhberg
\\
Department of Mathematics
\\
University of Illinois
\\
Urbana, IL, USA} \email{oikhberg@illinois.edu}

\begin{abstract}
We show that for quasi-greedy bases in real or complex Banach spaces
the error of the thresholding greedy algorithm of order $N$ is
bounded by the best $N$-term error of approximation times a function
of $N$ which depends on the democracy functions and the quasi-greedy
constant of the basis. If the basis is democratic this function is
bounded by $C\log N$. We show with two
examples that this bound is attained for quasi-greedy
democratic bases.
\end{abstract}

\thanks{First and second authors supported by Grant MTM2010-16518 (Spain).
Third author supported by a travel grant from Simons Foundation,
and by a COR grant from University of California System.}

\date{\today}
\subjclass[2010]{41A65, 41A46, 41A17.}

\keywords{Lebesgue-type inequalities, thresholding greedy algorithm,
quasi-greedy bases, democracy functions. }

\maketitle

%\centerline {\sc Notes on Lebesgue-type inequalities for the greedy algorithm}
%\centerline {\sc (work
%in progress)}
%
%\centerline{ {Gustavo Garrig\'os, Eugenio Hern\'andez, Timur
%Oikhberg}}
%
%\centerline{May 2012}

\section{Introduction}\label{secIntroduc}

Let $(\SX,\|.\|)$ be a Banach space (real or complex) and $\mathcal{B} = \{\be_j\}_{j=1}^\infty$ a
countable normalized basis\footnote{We assume normalization, $\|\be_j\|=1$,
 for notational convenience; all the results are actually valid for seminormalized bases,
 perhaps after suitable modifications in the constants.}.
Let $\Sigma_N\,, N=1,2,3,\dots$ be the set of all $y\in \SX$ with at
most $N$ non-null coefficients in the unique basis representation.
For $x\in \SX$, the {\bf $N$-term error of approximation} with
respect to $\mathcal B$ is
$$
  \sigma_N(x) = \sigma_N(x;\mathcal B, \SX) :=
  \inf_{y\in\Sigma_N} \|x-y\|_{\SX}\,, \quad N=1,2,3, \dots
$$
Given $x=\sum_{k=1}^\infty a_k(x)\be_k \in \SX\,,$ let $\pi$
denote any bijection of $\mathbb N$ such that
\begin{eqnarray} \label{greedy}
  |a_{\pi(k)}(x)| \geq |a_{\pi(k+1)}(x)|\, \quad \mbox{for all} \
  k\in \mathbb N\,.
\end{eqnarray}
The {\bf thresholding greedy algorithm of order $N$} (TGA) is
defined by
$$
  G_N(x) = G_N^\pi (x;\mathcal B, \SX) := \sum_{k=1}^N a_{\pi(k)}(x)
  \be_{\pi(k)}\,.
$$
It is not always true that $G_N(x) \to x$ (in $\SX$) as $N \to
\infty\,.$ A basis $\mathcal B$ is called {\bf quasi-greedy} if
$G_N(x) \to x$ as $N \to \infty$ for all $x\in \mathbb
X\,.$ This turns out to be equivalent (see \cite[Theorem 1]
{Wo}) to the existence of some constant $\tK$ such that
\begin{eqnarray}\label{1.4}
  \sup_N \|G_N(x)\| \leq \tK \| x \|\quad
  \mbox{for all} \ x\in \SX\,.
\end{eqnarray}
We define the {\bf quasi-greedy constant} $K$  of the basis
$\mathcal B$ to be the least $\tK$ such that \eqref{1.4} holds for
all permutations $\pi$ satisfying \eqref{greedy}.
%$$
%  \|G_N(x)\|_{\SX} \leq K \|x\|_{\SX} \quad \mbox{and}
%  \quad \|x- G_N(x)\|_{\SX} \leq K \|x\|_{\SX}\,, \quad
%  x\in \SX\,.
%$$

Given a basis $\mathcal B$ in a Banach space $\SX$, a {\bf
Lebesgue-type inequality} for the TGA is an inequality of the form
$$
  \|x- G_N(x)\| \leq
  C(N)\,\sigma_N(x)\,,\quad   x\in \SX\,,
$$
where $C(N)$ is a nondecreasing function of $N$. For a survey on
Lebesgue-type inequalities for the greedy algorithm see \cite{T1,
T2} and the references therein. We specially mention the recent
papers \cite{TYY2011a,TYY2011b}, which deal with Lebesgue-type
inequalities for quasi-greedy bases in $L^p$ spaces (see also
\cite{DST}).

The purpose of this paper is to study such inequalities for
quasi-greedy bases in general Banach spaces, thus complementing
and in some cases improving the results in
\cite{TYY2011a,TYY2011b,DST}. Towards this end we define the
sequence
\[
C_N:=\sup_{x\in\SX}\frac{\|x-G_N(x)\|}{\sigma_N(x)}\,.
\]
Following the notation in previous papers, we write \[
h_l(n)=\inf_{|A|=n}\big\|\sum_{j\in A}\be_j\big\|,\quad
h_r(n)=\sup_{|A|=n}\big\|\sum_{j\in
A}\be_j\big\|\mand\mu(N)=\sup_{n\leq N}\frac{h_r(n)}{h_l(n)}.\]
These functions are implicit in the first works on $N$-term
approximation. For instance, $\mu(N)$ is defined in \cite{Wo},
and $h_l$, $h_r$ appear explicitly in \cite{KaT}. In \cite{GHM2008,GHN} the latter are called
left and right democracy functions of the basis $\mathcal B$.

For $A\subset\SN$, we
 denote by $S_A$
the projection operator
\[ \Ts x=\sum_{j=1}^\infty a_j\be_j\longmapsto S_A(x)=\sum_{j\in A} a_j\be_j\;,\]
and consider also the sequence
\[
k_N:=\sup_{ |A|\leq N}\|S_A\|.\]

Our main result is the following:

\begin{theorem}\label{Th1}
If $\{\be_j\}_{j=1}^\infty$ is a (normalized) quasi-greedy basis in a
Banach space $\SX$ (real or complex), then \Be
C_N\,\approx\,\max\{\mu(N), k_N\},\quad \forall\;N=1,2,...
\label{1}\Ee
\end{theorem}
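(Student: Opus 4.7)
The theorem amounts to a two-sided estimate $C_N\asymp \max\{\mu(N),k_N\}$, so one must prove the upper bound $C_N\lesssim \max\{\mu(N),k_N\}$ together with the two lower bounds $C_N\gtrsim\mu(N)$ and $C_N\gtrsim k_N$ separately.

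For the \textbf{upper bound}, I would fix $x\in\SX$ and pick $y\in\Sigma_N$ with $\|x-y\|\le 2\sigma_N(x)$. Write $B=\supp y$ (so $|B|\le N$) and $\La=\supp G_N(x)$ (so $|\La|=N$). The identity $y-S_\La(x)=S_{B\setminus\La}(y)-S_\La(x-y)$ yields
\[
\|x-G_N(x)\|\le \|x-y\|+\|S_\La(x-y)\|+\|S_{B\setminus\La}(y)\|\le (1+k_N)\|x-y\|+\|S_{B\setminus\La}(y)\|.
\]
Splitting $S_{B\setminus\La}(y)=S_{B\setminus\La}(x)+S_{B\setminus\La}(y-x)$, the second summand is bounded by $k_N\|x-y\|$; for the first, the key input is the standard quasi-greedy ``democracy'' estimate: setting $t=\min_{j\in\La}|a_j(x)|$ (so $|a_j(x)|\le t$ on $B\setminus\La$), one has
\[
\|S_{B\setminus\La}(x)\|\lesssim K\,t\,h_r(N)\le K\mu(N)\,t\,h_l(N)\lesssim K^2\mu(N)\|S_\La(x)\|=K^2\mu(N)\|G_N(x)\|.
\]
The estimate is then closed by bounding $\|G_N(x)\|$ in terms of $\|y\|$ and $\|x-y\|$ via $\|G_N(x)\|\le\|G_N(x)-y\|+\|y\|$, using quasi-greediness and another application of the projection bound $k_N$, and finally absorbing, to obtain $\|x-G_N(x)\|\lesssim \max\{\mu(N),k_N\}\,\sigma_N(x)$.

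For the \textbf{lower bound} $C_N\gtrsim\mu(N)$, given $n\le N$ pick disjoint sets $A,B\subset\SN$ of size $n$ with $\|\sum_{j\in A}\be_j\|$ and $\|\sum_{j\in B}\be_j\|$ respectively close to $h_r(n)$ and $h_l(n)$, and test with
\[
x=\sum_{j\in A}\be_j+(1+\delta)\sum_{j\in B}\be_j,\qquad \delta>0\text{ small}.
\]
Then $G_n(x)=(1+\delta)\sum_{j\in B}\be_j$, so $\|x-G_n(x)\|=\|\sum_{j\in A}\be_j\|\approx h_r(n)$ and $\sigma_n(x)\le(1+\delta)\|\sum_{j\in B}\be_j\|\approx(1+\delta)h_l(n)$; the ratio tends to $h_r(n)/h_l(n)$ as $\delta\to 0$, and the supremum over $n\le N$ is $\mu(N)$.

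For the lower bound $C_N\gtrsim k_N$, take $A$ with $|A|\le N$ and $\|S_A\|\ge k_N/2$, and a normalized $u\in\SX$ with $\|S_A(u)\|\ge k_N/4$. The strategy is to construct a test vector $x$ built from $u$ by adjoining a disjoint block of equal-amplitude coefficients whose magnitude just exceeds $\max_j|a_j(u)|$, so that $G_N(x)$ is forced onto this block and discards the $k_N$-large piece $S_A(u)$, yielding $\|x-G_N(x)\|\gtrsim k_N$, while the block itself serves as a cheap best $N$-term approximation and keeps $\sigma_N(x)$ bounded. The main technical obstacle is calibrating the amplitude and cardinality of the block against the distribution of coefficients of $u$ so that both estimates hold simultaneously; this typically invokes the quasi-greedy truncation principle to replace an arbitrary extremal vector for $S_A$ by one with uniform coefficients on $A$, at the cost of a constant depending on $K$.
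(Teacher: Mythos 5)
Your overall architecture (upper bound plus two separate lower bounds) matches the paper's, and your decomposition for the upper bound is essentially the Konyagin--Temlyakov splitting used there. However, the closing step of your upper bound fails: once you reach $\|S_{B\setminus\La}(x)\|\lesssim K^2\mu(N)\|G_N(x)\|$ there is no way to convert $\|G_N(x)\|$ into $\|x-y\|$, because $\|G_N(x)\|$ is comparable to $\|y\|$, which is not controlled by $\sigma_N(x)$. Take $x=y+\e z$ with $\|y\|$ huge and $\e$ tiny: every inequality in your chain is true, but your bound is of size $\mu(N)\|y\|$ while the Lebesgue inequality requires $\|x-G_N(x)\|\lesssim C(N)\,\e\|z\|$; no absorption helps, since the stray term is not a small multiple of $\|x-G_N(x)\|$. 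The correct move --- the one crucial idea of the paper's proof --- is to bound the threshold $t=\min_{j\in\La}|a_j(x)|$ not by $\|S_\La(x)\|/(c_1h_l(N))$ but by $t\le\min_{j\in\La\setminus B}|a_j(x-y)|\le a^*_{|\La\setminus B|}(x-y)$ (the coefficients of $x$ and $x-y$ coincide off $B$, and a coordinate projection can only decrease the decreasing rearrangement, Lemma \ref{L1}), and then $a^*_m(x-y)\,h_l(m)\le\tfrac1{c_1}\|G_m(x-y)\|\le\tfrac K{c_1}\|x-y\|$ with $m=|\La\setminus B|\ge|B\setminus\La|$. This produces $\mu(N)\|x-y\|$ directly.

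The lower bounds also have gaps. For $C_N\gtrsim\mu(N)$ you assume you can choose \emph{disjoint} equal-size sets $A,B$ nearly extremal for $h_r$ and $h_l$; this is not automatic (the near-extremizers may be forced to overlap) and is exactly what Lemma \ref{3.2} of the paper establishes, using quasi-greediness and paying a factor $3K$. Moreover your construction bounds $C_n$ for $n\le N$, and $C_N$ is not obviously monotone in $N$; the paper pads the test vector with a third disjoint set of cardinality $N-n$ carrying coefficients intermediate between those on $A$ and on $B$, so that the greedy step of order exactly $N$ removes the right set. For $C_N\gtrsim k_N$ your sketch does not work as described: adjoining a block disjoint from $\supp u$ gives $x-G_N(x)=u$ of norm $1$, and using the block itself as competitor gives $\sigma_N(x)\le\|x-r\bone_D\|=\|x-G_N(x)\|$, i.e.\ ratio $1$. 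The paper's construction is $y=u-S_A(u)+r\bone_{\tA}$ with $A\subset\tA$, $|\tA|=N$, $r>\max_j|a_j(u)|$: then $G_N(y)=r\bone_{\tA}$, so $\|y-G_N(y)\|=\|u-S_A(u)\|\ge k_N/4$, while the competitor $r\bone_{\tA}-S_A(u)\in\Sigma_N$ (it is supported on $\tA$) gives $\sigma_N(y)\le\|u\|=1$. No truncation principle and no quasi-greediness are needed for this part; it holds for arbitrary bases.
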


\sline\emph{Remarks:}

 \Benu
\item  When $\{\be_j\}$ is unconditional, then
$k_N=O(1)$, so we obtain as a special case Theorem 4 in
\cite{Wo}.

\item For quasi-greedy bases it can be shown that \Be k_N \leq
c\log N.\label{kNlogN}\Ee This is essentially contained in
\cite[Lemma 8.2]{DKK} (see also \cite[Lemma 2.3]{DST}). Since this result
is often used in the paper, we outline a proof in $\S5$ below.

\item When $\{\be_j\}$ is quasi-greedy, some upper bounds for $C_N$
have recently appeared in the literature: in \cite[Theorem 2.1]{TYY2011b} it was shown that\Be
 C_N\,\lesssim\,\mu(N)\,k_N\,,\label{kNTYY}
 \Ee while in  \cite[Thm 1.1]{H} it is proved that
\Be C_N\,\lesssim\,\sum_{1\leq k\leq N}\frac{\mu(k)}{k}\quad
\Big(\mbox{\small$\lesssim\,\mu(N)\,\log N$}\;\Big).\label{kN} \Ee
Notice that \eqref{1}, being an equivalence, improves strictly over these in some cases.
 For instance, if $\SX$ is such that $\mu(N)\approx (\log N)^\al$ and say $k_N\approx\log N$,
 %(see \cite[example 5.6]{GHN} with $\SX=bmo_r$).
 then  \eqref{kNTYY} and \eqref{kN} would only give
$C_N\lesssim(\log N)^{\al+1}$, while Theorem \ref{Th1} implies
$C_N\approx(\log N)^{\min\{\al, 1\}}$. For constructions of such
examples, see \eqref{6kN} below.

\item When $\{\be_j\}$ is quasi-greedy and democratic (i.e. $\mu(N)=O(1)$), then \eqref{1} and \eqref{kNlogN} give \Be
C_N\,\approx\, k_N\,\lesssim \,\log N. \label{aux2}\Ee We show in
section 6 that this logarithmic bound can actually be attained, answering a question posed in \cite{H}.
 One such
example is given by the Haar basis in $BV(\SR^d)$, $d>1$ (see
$\S$\ref{examples}).
%We remark that the
%same question was asked for Hilbert spaces (see the paragraph following Theorem 10 in \cite{TYY2011a})
This is in contrast with the Hilbert space case, where it was
recently noticed by Wojtaszczyk that $k_N$ cannot attain $\log N$
(\cite{Wo4}; see also $\S9$ below).

 \Eenu

 \medskip

Let also denote by $\tilde\sigma_N(x)$ the \emph{expansional best
approximation} to $x$, that is if $x=\sum_{k=1}^{\infty} a_k
\be_k\,,$ then
$$ \tilde \sigma_N (x) = \tilde \sigma_N (x; \mathcal B, \SX) :=
\inf_{A,\, |A|=N} \{\|x - \sum_{k\in A} a_k \be_k\|\}.$$ In this
case it is known that, for quasi-greedy bases,\Be
\widetilde{C}_N:=\sup_{x\in\SX}\frac{\|x-G_N(x)\|}{\tilde\sigma_N(x)}\,\approx\,\mu(N);
\label{tCN}\Ee see \cite[Theorem 2.2]{TYY2011b} for the upper
bound (the lower bound was essentially in \cite{Wo}; see also
Proposition \ref{proCmu} below). In \cite{H} it was asked whether
one could prove bounds for $C_N$ using \eqref{tCN} and suitable
bounds on the sequence
\[D_N := \sup_{x\in\SX}\frac{\tilde\sigma_N(x)}{\sigma_N(x)}\geq 1.\]
Here we prove the following

\begin{theorem}\label{Pro2}
For any (normalized, not necessarily quasi-greedy) basis $\{\be_j\}$
we have \Be \frac{k_N}4\,\leq\,D_N\,\leq\, 2 k_N,\quad
\forall\;N=1,2,... \label{5}\Ee
\end{theorem}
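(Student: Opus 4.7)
The plan is to prove the two inequalities separately.

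For the upper bound $D_N\le 2k_N$, given $x\in\SX$ and $\e>0$, I would pick $y\in\Sigma_N$ with $\|x-y\|\le(1+\e)\sigma_N(x)$ and let $A\supseteq\supp(y)$ have $|A|=N$. Because $y=S_A(y)$, the identity $(I-S_A)(x)=(I-S_A)(x-y)$ holds, whence
\[
\tilde\sigma_N(x)\,\le\,\|(I-S_A)(x)\|\,\le\,(1+k_N)\|x-y\|\,\le\, 2k_N(1+\e)\sigma_N(x),
\]
using $k_N\ge1$ (which holds since $\|\be_j\|=1$ forces $\|S_{\{j\}}\|\ge 1$). Letting $\e\to0$ yields the claim.

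For the lower bound $D_N\ge k_N/4$, one may first assume $k_N\ge 4$, else $D_N\ge 1\ge k_N/4$ trivially. For small $\e>0$, choose $A$ with $|A|=N$ and $u\in\SX$ with $\|u\|=1$ and $\|S_A u\|\ge(1-\e)k_N$. Setting $v=S_A u$ and $w=u-v$, the candidate witness is $x=2v-u=v-w$. Since $2v\in\Sigma_N$, one gets $\sigma_N(x)\le\|x-2v\|=\|u\|=1$. For the lower bound on $\tilde\sigma_N(x)$, note that the coefficients of $x$ equal $u_j$ on $A$ and $-u_j$ on $A^c$, so for any $B$ of size $N$,
\[
x-S_B x\;=\;S_{A\setminus B}(v)\,-\,S_{A^c\setminus B}(w).
\]
Applying the projections $S_A$ and $I-S_A$ to this identity produces the two estimates
\[
\|x-S_B x\|\;\ge\;\max\!\left(\frac{\|v-S_{B\cap A}(v)\|}{k_N},\;\frac{\|w-S_{B\cap A^c}(w)\|}{1+k_N}\right).
\]
The remaining task is to verify that this maximum is $\gtrsim k_N$ uniformly in $B$, using $\|v\|\ge(1-\e)k_N$ and $\|w\|\ge(1-\e)k_N-1$ together with a case split on the size $|B\cap A|$: when $|B\cap A|$ is small the first factor dominates, while when $|B\cap A|$ is close to $N$ the set $|B\cap A^c|$ is small and the second factor dominates.

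The main obstacle is the intermediate regime, where $|B\cap A|$ is neither close to $0$ nor close to $N$: here, neither of the two estimates is obviously large by itself, since a well-chosen $B$ could absorb much of both $v$ and $w$. Overcoming this likely requires a careful (possibly perturbative) choice of $u$ so that the support of $w$ is sufficiently spread out, ensuring that $\|w-S_{B_2}(w)\|$ stays bounded below for every $B_2\subseteq A^c$ of size at most $N$, without meaningfully degrading the ratio $\|S_A u\|/\|u\|$.
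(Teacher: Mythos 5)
Your upper bound $D_N\le 2k_N$ is correct and is exactly the paper's argument. The lower bound, however, has a genuine gap, which you yourself flag at the end: the ``intermediate regime'' is not a technicality but the heart of the matter, and your witness $x=2S_Au-u$ does not resolve it. The expansional approximation is allowed to use \emph{any} set $B$ with $|B|=N$, and nothing in your construction forces $B$ to be (close to) $A$. Concretely, a competitor $B$ can trade away a few indices of $A$ carrying small coefficients of $u$ in exchange for indices of $A^c$ carrying most of the mass of $w=u-S_Au$; then both $S_{A\setminus B}(v)$ and $S_{A^c\setminus B}(w)$ can be small simultaneously, and $\|x-S_Bx\|$ need not be $\gtrsim k_N$. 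Your proposed repair --- perturbing $u$ so that $w$ is ``spread out'' and $\|w-S_{B_2}(w)\|$ stays large for every $|B_2|\le N$ --- is itself a strong and unproved claim about an arbitrary basis; it is not clear such a $u$ exists while keeping $\|S_Au\|/\|u\|\approx k_N$.

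The paper's proof closes this gap with a different idea: a large ``spike.'' Starting from $x$ with $\|x\|=1$, finite support $B$, and $\|x-S_A(x)\|\ge k_N/4$, it sets $y=x-S_A(x)+r\bone_{\tA}$, where $\tA\supseteq A$ has $|\tA|=N$ and $r>(2+k_N+k_{2N})2\bc$ ($\bc$ the basis constant). Two things happen. First, $\sigma_N(y)\le 1$ because the \emph{unrestricted} approximant $r\bone_{\tA}-S_A(x)\in\Sigma_N$ is supported on the single $N$-set $\tA$ and simultaneously cancels the spike and restores $S_A(x)$, leaving $y-(r\bone_{\tA}-S_A(x))=x$. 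Second, for the \emph{expansional} approximation, any admissible $C\ne\tA$ misses some $j_0\in\tA$, and the surviving coefficient $r$ forces $\|y-S_C y\|\ge r/(2\bc)-(1+k_{2N})>1+k_N$, which exceeds the value $\|y-r\bone_{\tA}\|=\|x-S_A(x)\|\le 1+k_N$ attained at $C=\tA$; hence $\tilde\sigma_N(y)=\|x-S_A(x)\|\ge k_N/4$. This quantitative gap between what deletion of terms can achieve and what substitution can achieve is the missing ingredient in your argument.
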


\sline\emph{Remark:}
The right hand side of \eqref{5} together with \eqref{kNlogN}
gives $\tsigma_N(x) \lesssim(\log N)\sigma_N(x)$ for quasi-greedy bases. This was noticed in
\cite[Lemma 2.4]{DST}, answering a
question from \cite{H}. The left hand side of \eqref{5} seems to be new.

\

Our last result is the following theorem, which
answers a question of
Wojtaszczyk (personal communication to the second author on
November, 2011).

\begin{theorem}
\label{th2new} If $\{\be_j\}_{j=1}^\infty$ is a quasi-greedy basis in $\SX$, then there exists $c>0$ such
that for all $N,k=1,2,\ldots$
\[ % \Be
\big\|x-G_{N+k}(x)\big\|\,\leq\,c\,\Big(1+
\frac{h_r(N)}{h_l(k)}\Big)\,\sigma_N(x),\quad \forall\;x\in\SX.
\] % \label{S1}\Ee
\end{theorem}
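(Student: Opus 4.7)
The plan is to perform a direct decomposition of $x-G_{N+k}(x)$ relative to a near-best $N$-term approximation of $x$ and to bound each of the two resulting pieces using two standard consequences of quasi-greediness: the \emph{truncation inequality} $\al\,\bignorm{\sum_{j\in E}\mathrm{sgn}(a_j(y))\,\be_j}\lesssim \|y\|$, valid whenever $|a_j(y)|\ge\al$ for all $j\in E$, and the \emph{majorant inequality} $\bignorm{\sum_{j\in F}b_j\,\be_j}\lesssim \al\,h_r(|F|)$, valid whenever $|b_j|\le \al$ for all $j\in F$. Both are classical for quasi-greedy bases and both ultimately follow from the sign-equivalence $\bignorm{\sum_j\e_j\be_j}\approx\bignorm{\sum_j\be_j}$ for $|\e_j|=1$ (with constants depending only on $K$); in the complex case the majorant step additionally invokes a layer-cake representation of the $b_j$'s.

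Fix $x\in\SX$ and pick $z=\sum_{j\in B}c_j\,\be_j$ with $|B|=N$ and $\|y\|\le 2\sigma_N(x)$, where $y:=x-z$. Let $\Ga$ be the greedy set for $x$ of cardinality $N+k$, set $\al:=\min_{j\in\Ga}|a_j(x)|$, and define $A:=B\setminus\Ga$ and $A':=\Ga\setminus B$. From $|\Ga|-|B|=k$ we get $|A'|\ge k$, and because $A'\subseteq B^c$ we have $a_j(y)=a_j(x)$ on $A'$, so $|a_j(y)|\ge\al$ there. Applying the truncation inequality to any $E\subseteq A'$ with $|E|=k$, combined with the lower bound $\bignorm{\sum_{j\in E}\mathrm{sgn}(a_j(y))\be_j}\gtrsim h_l(k)$ (by sign-equivalence and the definition of $h_l$), yields the crucial scalar estimate
\[
\al\,h_l(k)\ \lesssim\ \|y\|\ \le\ 2\,\sigma_N(x).
\]

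Now partition $\Ga^c=A\sqcup C$ with $C:=B^c\cap\Ga^c$, so that
\[
x-G_{N+k}(x)\ =\ \sum_{j\in A}a_j(x)\,\be_j\ +\ \sum_{j\in C}a_j(x)\,\be_j\ =:\ \mathrm{I}+\mathrm{II}.
\]
For $\mathrm{I}$, since $|A|\le N$ and $|a_j(x)|\le\al$ on $A$, the majorant inequality gives $\|\mathrm{I}\|\lesssim \al\,h_r(N)\lesssim (h_r(N)/h_l(k))\,\sigma_N(x)$. For $\mathrm{II}$, the inclusion $C\subseteq B^c$ yields $a_j(x)=a_j(y)$ on $C$, and (modulo a harmless tie-breaking at level $\al$) $C=\{j\in B^c:|a_j(y)|<\al\}$. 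Introducing the hard threshold $H_\al(y):=\sum_{|a_j(y)|\ge\al}a_j(y)\,\be_j$, which coincides with a greedy approximation of $y$ and therefore satisfies $\|y-H_\al(y)\|\le (1+K)\|y\|$, we obtain
\[
\mathrm{II}\ =\ \bigl(y-H_\al(y)\bigr)\ -\ \sum_{j\in B,\;|a_j(y)|<\al} a_j(y)\,\be_j.
\]
The first summand is $\lesssim \sigma_N(x)$ by quasi-greediness, and the residual sum has at most $N$ indices with coefficient magnitudes $<\al$, so the majorant inequality again bounds it by $\lesssim \al\,h_r(N)\lesssim (h_r(N)/h_l(k))\,\sigma_N(x)$. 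Adding the three contributions proves the theorem. The main technical point is the verification of the majorant inequality in the complex case; once this standard quasi-greedy tool is in place, the remainder of the argument is a clean algebraic recombination.
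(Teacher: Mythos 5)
Your argument is correct in its overall design and takes a route that is genuinely, if mildly, different from the paper's. The paper compares $G_{N+k}(x)$ with the competitor $p+G_k(x-p)$, so its auxiliary set is $B=\supp G_k(x-p)$ (the $k$ largest coefficients of the residual), and the error is split into three pieces indexed by $P\cup B$ and $\Ga$; the key scalar comparison $\max_{P\setminus\Ga}|a_j(x)|\le\min_{\Ga\setminus P}|a_j(x-p)|\lesssim \|x-p\|/h_l(k)$ is then extracted via Lemma \ref{L1}. You instead threshold at $\al=a^*_{N+k}(x)$, prove the single scalar inequality $\al\,h_l(k)\lesssim\|y\|$ once (using that $\Ga\setminus B$ contains at least $k$ indices on which $y$ and $x$ agree and exceed $\al$), and spend it twice; the role of $G_k(y)$ is played by the hard threshold $H_\al(y)$, a different greedy sum of $y$ (it may contain more or fewer than $k$ terms). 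Both proofs run on the same two quasi-greedy tools, \eqref{dil} and \eqref{qg-constant}, together with the almost-monotonicity of $h_r$ (Lemma \ref{lap.1}), which you use silently when replacing $h_r(|A|)$ by $h_r(N)$; your organization around one clean scalar estimate is arguably tidier.

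The one point that is not harmless is the tie-breaking. Writing $C=B^c\cap\Ga^c$ and $C'=\{j\in B^c:|a_j(y)|<\al\}$, one has $C=C'\sqcup D$ with $D=\{j\in B^c\setminus\Ga:\,|a_j(x)|=\al\}$, and your displayed identity for $\mathrm{II}$ accounts only for $S_{C'}(y)$. The cardinality of $D$ is controlled by neither $N$ nor $k$: if many coefficients of $x$ tie at level $\al$, the greedy selection of $\Ga$ discards an arbitrary number of them, so $S_D(y)$ cannot be absorbed into the majorant bound $\al\,h_r(N)$, and since the theorem must hold for every admissible greedy ordering you cannot choose the ties away. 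The term can, however, be estimated: $D$ lies in the single layer $F=\{j:|a_j(y)|=\al\}=\{j:|a_j(y)|\ge\al\}\setminus\{j:|a_j(y)|>\al\}$, a difference of two greedy sets, so $\|S_F(y)\|\le 2K\|y\|$; since all coefficients of $y$ on $F$ have modulus exactly $\al$, \eqref{dil} and Lemma \ref{lap.1} give $\|S_D(y)\|\le c_2\al\|\bone_D\|\le c_2K\al\|\bone_F\|\le (c_2K/c_1)\|S_F(y)\|\le (2c_2K^2/c_1)\|y\|\lesssim\sigma_N(x)$. Adding this fourth contribution closes the gap and completes your proof.
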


\sline Results of this type have appeared before in
the literature. For unconditional bases, this theorem was proved in
\cite[Thm 5]{KaT}; see also \cite[Thm 4]{Wo3}. For quasi-greedy
democratic bases it is essentially contained in \cite{DKKT}. Here
we extend its validity to general quasi-greedy bases.

%The proof is given in $\S8$, and combines ideas in these two papers (which in turn go back to the work
% of Konyagin and Temlyakov \cite{KT}).

A slightly weaker version of Theorem \ref{th2new} can also be found in \cite{KT2}; namely, given $N$ and $k$ there
exists a set $A$ of cardinality not exceeding $N+k$ such that
$$
\big\|x-S_{A}(x)\big\|\,\leq\,c\,\Big(1+
\frac{h_r(N)}{h_l(k)}\Big)\,\sigma_N(x),\quad \forall\;x\in\SX. $$
The improvement in Theorem \ref{th2new} consists in showing that the set $A$ can be obtained by
running the greedy algorithm.

 We finally remark that the proofs of Theorems \ref{Th1}, \ref{Pro2} and \ref{th2new} combine ideas
 present in various of the above quoted references, but whose main lines essentially stem from the original work
 of Konyagin and Temlyakov \cite{KT}.

 \

{\bf Acknowledgements}. This work started when the second and third
authors participated in the \textit{Concentration week on greedy
algorithms in Banach spaces and compressed sensing} held on July
18-22, {{2011,}} at Texas A\&M University. We
 express our gratitude to the Organizing Committee for
the invitation to participate in this meeting. In addition, the
third author thanks the second author for arranging his visit to
UAM, where this work continued. A preliminary version of this paper
(\cite{H}) was written by the second author and posted in ArXiv in
November 2011.
\medskip

\section{Proof of Theorem \ref{Th1}: upper bounds}

The proof follows the strategy developed in \cite{KT}, together
with two known estimates for quasi-greedy bases. First, as
mentioned in $\S1$, there exists a (smallest) constant $K$ such
that \Be
%K:=\sup_{N\geq 1}\sup_{x\in\SX, x\neq0}
\|G_N(x)\|\leq\,K\,\|x\|,\quad\forall\;x\in\SX,\;\;\forall\;N=1,2,\ldots,\label{qg-constant}
\Ee see \cite[Th 1]{Wo}. Also, there exist $c_1,c_2>0$ such that
\Be c_1 (\min_{k\in A}|a_k|)\,\|\sum_{k\in A}\be_k
\|\leq\|\sum_{k\in A}a_k\be_k\|\leq c_2 (\max_{k\in
A}|a_k|)\|\sum_{k\in A}\be_k\|.\label{dil}\Ee These inequalities
are proved in \cite[Lemmas 2.1 and 2.2]{DKKT} for real scalars
$a_k$, setting $c_1= 1/(4K^2)$ and $c_2=2K$. For completeness, in the
appendix ($\S10$) we outline the proof also for complex scalars $a_k$, in
which case one can let $c_1= 1/(8\sqrt{2}K^2)$ and
$c_2=4\sqrt{2}K$.

\

We shall write $a^*_k(x)$ for the decreasing rearrangement of the
basis coefficients of $x$; that is, if $x=\sum_{j=1}^\infty
a_j\be_j$, we set $a^*_k(x)=|a_{\pi(k)}|$ when $\pi$ is any
permutation of $\SN$ such that $|a_{\pi(1)}|\geq|a_{\pi(2)}|\geq
|a_{\pi(3)}|\geq ... $ As in \cite{H}, we shall use the following
simple (but crucial) observation.

\begin{lemma}
\label{L1} For all $A\subset\SN$ and $x\in\SX$ we have
$a^*_k\big(S_A(x)\big)\leq a^*_k(x)$.
\end{lemma}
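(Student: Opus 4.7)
The plan is essentially a one-line observation based on monotonicity of the decreasing rearrangement. By uniqueness of the basis expansion, if $x=\sum_{j=1}^\infty a_j\be_j$ then $S_A(x)=\sum_{j\in A} a_j\be_j$, so the $j$-th basis coefficient of $S_A(x)$ equals $a_j$ for $j\in A$ and $0$ otherwise. Writing $b_j$ for this coefficient, we have the pointwise bound $|b_j|\le|a_j|$ for every $j\in\SN$.

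Next I would invoke the general fact that if $(b_j)$ and $(a_j)$ are two scalar sequences tending to zero with $|b_j|\le|a_j|$ for every $j$, then $b^*_k\le a^*_k$ for every $k$. This in turn follows from the layer-cake identity
\[
a^*_k \;=\; \inf\bigl\{\lambda\ge 0 \;:\; \Card\{j: |a_j|>\lambda\}<k\bigr\},
\]
because the pointwise domination yields $\{j: |b_j|>\lambda\}\subseteq\{j: |a_j|>\lambda\}$ for every $\lambda\ge 0$, so the infimum defining $b^*_k$ is taken over a set that contains the one defining $a^*_k$.

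Combining the two observations gives $a^*_k\bigl(S_A(x)\bigr)\le a^*_k(x)$, as required. There is no real obstacle here: the statement is a purely combinatorial property of the coefficient sequences, and it uses neither the quasi-greedy hypothesis nor the Banach space norm — only the uniqueness of the expansion provided by $\mathcal{B}$ being a Schauder basis (which ensures $a_j\to 0$ and hence that the rearrangement is well defined).
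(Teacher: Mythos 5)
Your proof is correct and coincides with the intended argument: the paper states Lemma \ref{L1} without proof, calling it a ``simple (but crucial) observation,'' and the pointwise domination $|b_j|\le|a_j|$ of the coefficient sequences together with the monotonicity of the decreasing rearrangement (via the distribution-function characterization you give) is precisely the one-line justification that is being left to the reader. Your remark that neither quasi-greediness nor the norm is used is also accurate.
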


We now prove the theorem. Fix $N\geq 1$ and $x\in\SX$. Take any
$y=\sum_{j\in A}y_j\be_j$ with $|A|=N$. We shall show that \Be
\|x-G_N(x)\|\leq\,c\,\max\{\mu(N), k_N\}\,\|x-y\| . \label{goal}\Ee
Then, taking the infimum of the right hand side over all $y\in\Sigma_N$,
we obtain the upper estimate for $C_N$ in \eqref{1}.

Write $G_N(x)=S_\Ga(x)$ with $|\Ga|=N$. Then \Beas \|x-G_N(x)\| &
= &  \|x-S_A(x)+S_A(x)-S_\Ga(x)\|\\
& \leq & \|x-S_A(x)\|+ \|S_{\AG}(x)\|+\|S_{\GA}(x)\|.\Eeas The
first and third terms are easily bound by $c\,k_N\|x-y\|$; namely,
\[ \|x-S_A(x)\| \leq  \|x-y\|+\|S_A(y)-S_A(x)\|\leq
(1+k_N)\|x-y\|,\] while \[\|S_\GA(x)\|  =
\big\|S_{\GA}(x-y)\big\|\,\leq \,k_N\,\|x-y\|.
\]
We next show that $\|S_\AG(x)\|$ can be controlled by
$c\,\mu(N)\|x-y\|$. First notice that \Be \big\|S_\AG(x)\big\|\,\leq
\, c_2(\max_{k\in\AG}|a_k|)\,{\big\|\sum_{k\in\AG} \be_k
\big\|}\,\leq\, c_2\,\mu(N)\,(\min_{k\in\GA}|a_k|)\,{
\big\|\sum_{k\in\tG} \be_k\big\|}\, \label{aux1}\Ee
where we choose as $\tG$ any set of cardinality $|\tG|=|\AG|=|\GA|$
in which $x-y$ attains the largest coefficients, i.e.
$G_{|\GA|}\big(x-y\big)=S_\tG(x-y)$. From Lemma \ref{L1} one easily
sees that \Beas \min_{k\in\GA}\big|a_k(x)\big| & =&
\min_{k\in\GA}\big|a_k\big(S_\GA(x-y)\big)\big|\,=\,a^*_{|\GA|}\big(S_{\GA}(x-y)\big)\\&
\leq& a^*_{|\GA|}(x-y) \,=\,\min_{k\in \tG} \big|a_k(x-y)\big|. %\,=\,a^*_{|\GA|}\big(G_{|\GA|}(x-y)\big)
\Eeas Thus, using again \eqref{dil}, one can bound the right side
of \eqref{aux1} by a constant times
\[ \mu(N)\,\big\|S_\tG(x-y)\big\|\,=\,\mu(N)\,\big\|G_{|\GA|}(x-y)\big\|\,\leq\, K\,\mu(N)\,\|x-y\|,
\]
as we wished to prove. Notice that the final multiplicative constant
involved in this process is of order $Kc_2/c_1=O(K^4)$. \ProofEnd

\section{Proof of Theorem \ref{Th1}: lower bounds}

The bound $C_N\gtrsim \mu(N)$ was proved by Wojtaszczyk when
$\{\be_j\}$ is an unconditional basis; \cite[Thm 4]{Wo}. As pointed
out in \cite{H}, these arguments can easily be adapted to the more
general setting of quasi-greedy bases; we include the proof for
completeness. Recall that $K$ is our notation for the quasi-greedy
constant defined in \eqref{qg-constant}.

\begin{proposition}\label{proCmu}
If $\{\be_j\}$ is quasi greedy then $C_N\geq \,\widetilde{C}_N\geq\tfrac1{3K}\, \mu(N)$.
\end{proposition}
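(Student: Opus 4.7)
The plan is to handle the two inequalities separately, with the first being essentially trivial and the second following from a sharp test-vector construction in the spirit of \cite{Wo}.

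First, the inequality $C_N\geq\widetilde{C}_N$ is immediate: every expansional $N$-term truncation $\sum_{k\in A}a_k(x)\be_k$ is a particular element of $\Sigma_N$, so $\sigma_N(x)\leq\tilde\sigma_N(x)$ for every $x\in\SX$, and the supremum defining $C_N$ is at least the one defining $\widetilde{C}_N$.

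For the second inequality I would fix $n\leq N$ and a small $\e>0$, and assemble three pairwise disjoint finite subsets of $\SN$: a set $A$ with $|A|=n$ realizing $h_r(n)$ up to a factor $(1+\e)$, a set $B_0$ with $|B_0|=n$ realizing $h_l(n)$ up to a factor $(1+\e)$, and an auxiliary set $B_1$ disjoint from $A\cup B_0$ with $|B_1|=N-n$. Writing $B=B_0\cup B_1$ (so $|B|=N$), the test vector is
\[
  x\,=\,\sum_{j\in A}\be_j\,+\,(1+\e)\sum_{j\in B}\be_j.
\]
Because every $B$-coefficient strictly exceeds every $A$-coefficient, any permutation $\pi$ satisfying \eqref{greedy} selects all of $B$ first, forcing $G_N(x)=(1+\e)\sum_{j\in B}\be_j$ and hence
\[
  \|x-G_N(x)\|\,=\,\Bigl\|\sum_{j\in A}\be_j\Bigr\|\,\geq\,\frac{h_r(n)}{1+\e}.
\]
For the expansional best $N$-term approximation I would keep the $N$ coefficients on $A\cup B_1$ and discard those on $B_0$, yielding
\[
  \tilde\sigma_N(x)\,\leq\,(1+\e)\,\Bigl\|\sum_{j\in B_0}\be_j\Bigr\|\,\leq\,(1+\e)^2\, h_l(n).
\]

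Combining these bounds gives $\widetilde{C}_N\geq h_r(n)/[(1+\e)^3 h_l(n)]$; letting $\e\to 0^+$ and taking the supremum over $n\leq N$ then yields $\widetilde{C}_N\geq \mu(N)$, which is stronger than the stated $\mu(N)/(3K)$ bound (since $K\geq 1$). The main subtlety is forcing the greedy algorithm to do what I want: the strict gap $1+\e>1$ pins down $G_N(x)$ by breaking any potential ties between $A$- and $B$-coordinates, while including $B_1$ inside $B$ ensures that exactly $N$ coefficients are truncated even in the regime $n<N$. The quasi-greedy constant $K$ appearing in the statement acts here only as a slack factor; the argument above does not invoke quasi-greediness beyond the well-definedness of $G_N(x)$, so in principle the constant $1/(3K)$ could be replaced by a universal one.
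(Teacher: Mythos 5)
Your first inequality ($C_N\ge\widetilde C_N$) and your test-vector computation are fine and essentially the same as the paper's, but there is a genuine gap at the very first step of the second part: you assert that one can choose a set $A$ nearly realizing $h_r(n)$ and a set $B_0$ nearly realizing $h_l(n)$ that are \emph{pairwise disjoint}. Nothing in the definitions of $h_r$ and $h_l$ guarantees this. The supremum and the infimum are taken over arbitrary $n$-element sets, and it may happen that every near-maximizer of $\|\bone_A\|$ meets every near-minimizer (e.g.\ all near-extremal sets of both kinds could be forced to contain a common distinguished index). Disjointness is not a cosmetic convenience in your construction: if $A\cap B_0\neq\emptyset$ the coefficients of $x=\bone_A+(1+\e)\bone_B$ add up on the overlap, $G_N(x)$ need not equal $(1+\e)\bone_B$, and $x-G_N(x)$ is no longer $\bone_A$. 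This disjointification is exactly the content of the paper's Lemma \ref{3.2}: starting from possibly overlapping near-extremal sets, quasi-greediness (in the form $\|\bone_{A\cap B}\|\le K\|\bone_B\|$) is used to replace $A$ by $\tA=(A\setminus B)\cup C$ disjoint from $B$, while keeping $\|\bone_{\tA}\|/\|\bone_B\|\ge\mu(N)/(3K)$; only then does the test vector argument (which in the paper takes the same form as yours, with coefficients $1+2\e$, $1+\e$, $1$) go through.

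Consequently your closing remark --- that quasi-greediness is used only to make $G_N$ well defined, so the constant $1/(3K)$ could be replaced by a universal one --- is precisely backwards: the factor $3K$ is the price of making the two extremal sets disjoint, and your argument does not establish $\widetilde C_N\ge\mu(N)$ with constant $1$. What a construction like yours proves without further input is a lower bound in terms of the \emph{disjoint} democracy ratio $\sup\{\|\bone_A\|/\|\bone_B\|: |A|=|B|\le N,\ A\cap B=\emptyset\}$; relating that quantity back to $\mu(N)$ is where $K$ enters. To repair the proof, either insert Lemma \ref{3.2} (or an equivalent disjointification step) before choosing your sets, or restate the conclusion in terms of the disjoint ratio.
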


We shall use the following lemma. Here we denote $\bone_A=\sum_{j\in A}\be_j$.
\begin{lemma}\label{3.2}
If $\{\be_j\}$ is quasi greedy with constant $K$, then for every $N$
there exist disjoint sets $A,B$ such that
\[
|A|=|B|\leq N\mand \frac{\|\bone_A\|}{\|\bone_{B}\|}\,\geq
\tfrac1{3K}\,\mu(N).
\]
\end{lemma}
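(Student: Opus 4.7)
The plan is to pick sets $A_0, B_0$ of the same size $n\le N$ that essentially realize the ratio $\mu(N)$, and then disjointify them by removing their intersection, using the quasi-greedy property to control the intersection norm. Since $\mu(N)=\max_{n\le N} h_r(n)/h_l(n)$ is a maximum over finitely many $n$, I would fix $n$ achieving it and, for arbitrarily small $\eta>0$, pick $A_0, B_0$ of size $n$ with $\|\bone_{A_0}\|\ge(1-\eta)h_r(n)$ and $\|\bone_{B_0}\|\le(1+\eta)h_l(n)$. Writing $M:=\mu(N)$, the regime $M\le 3K$ is handled trivially: two disjoint singletons give ratio $1\ge M/(3K)$. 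So I focus on $M>3K$.

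Set $C=A_0\cap B_0$ and $A'=A_0\setminus C$, $B'=B_0\setminus C$. These are disjoint of common cardinality $n-|C|\le N$. The key observation is that $\bone_{B_0}$ has constant coefficient $1$ on $B_0\supset C$, so listing the elements of $C$ first in a permutation satisfying \eqref{greedy} is admissible; in other words, $\bone_C=G_{|C|}(\bone_{B_0})$ for some admissible greedy ordering, and \eqref{qg-constant} gives
\[
\|\bone_C\|\;\le\;K\,\|\bone_{B_0}\|\;\le\;K(1+\eta)\,h_l(n).
\]
Note the crucial asymmetry: the intersection is bounded by the \emph{small} side $\|\bone_{B_0}\|$, not the large side $\|\bone_{A_0}\|$.

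From here, the triangle inequality does the rest. I would derive
\[
\|\bone_{B'}\|\;\le\;\|\bone_{B_0}\|+\|\bone_C\|\;\le\;(1+K)(1+\eta)\,h_l(n),
\]
\[
\|\bone_{A'}\|\;\ge\;\|\bone_{A_0}\|-\|\bone_C\|\;\ge\;\bigl((1-\eta)M-K(1+\eta)\bigr)\,h_l(n),
\]
so that $\|\bone_{A'}\|/\|\bone_{B'}\|$ tends to $(M-K)/(1+K)$ as $\eta\to 0$. The inequality $(M-K)/(1+K)\ge M/(3K)$ reduces to $M(2K-1)\ge 3K^2$, which holds strictly whenever $M>3K$ and $K\ge 1$ (since then $3K\ge 3K^2/(2K-1)$); choosing $\eta$ small enough, the sets $A:=A'$ and $B:=B'$ satisfy the conclusion.

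The main conceptual obstacle is the choice of which vector to apply the quasi-greedy property to. Applying it to $\bone_{A_0}$ would only give $\|\bone_C\|\le K\|\bone_{A_0}\|=KMh_l(n)$, which blows up with $M$ and destroys the ratio. Once one notices that \eqref{qg-constant} can instead be deployed on the \emph{small}-norm vector $\bone_{B_0}$ (exploiting the fact that all nonzero coefficients of $\bone_{B_0}$ are equal, so the greedy permutation over $C$ is free), everything else—the trivial reduction for small $M$, the disjointification, and the final algebraic check—is routine.
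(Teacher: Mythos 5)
Your proof is correct and rests on the same key idea as the paper's: the intersection $C=A_0\cap B_0$ is a greedy truncation of the constant-coefficient vector $\bone_{B_0}$, so $\|\bone_C\|\le K\,\|\bone_{B_0}\|$ is controlled by the \emph{small} norm rather than the large one. The only (harmless) difference is in the disjointification: you delete $C$ from both sets and absorb it via the triangle inequality on each side, whereas the paper keeps $B$ intact and pads $A\setminus B$ back up to cardinality $|B|$ with fresh indices, invoking quasi-greediness a second time; both routes yield the $\tfrac{1}{3K}\,\mu(N)$ bound.
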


 \begin{proof}
We may assume that $\mu(N)>3K$ (otherwise choose $|A|=|B|=1$). Then
there exist $A,B$ (not necessarily disjoint) with $|A|=|B|\leq N$
and
\[
\max\{\tfrac12\mu(N),\,3K\}\,<\, \frac{\|\bone_A\|}{\|\bone_{B}\|}.
\]
The quasi-greedy condition implies that $\|\bone_{A\cap B}\|\leq K
\|\bone_{B}\|$, which inserted above gives
\[
3K< \frac{\|\bone_A\|}{\|\bone_{B}\|}\leq
K\,\frac{\|\bone_A\|}{\|\bone_{A\cap B}\|}
\]
and therefore $\|\bone_{A\cap B}\|\leq \frac13 \|\bone_{A}\|$.
Thus
\[\frac{\|\bone_A\|}{\|\bone_{B}\|}\,\leq \frac{\|\bone_{A\cap B}\|+\|\bone_{A\setminus B}\|}{\|\bone_{B}\|}\,\leq\,
\tfrac 13\,\frac{\|\bone_A\|}{\|\bone_{B}\|} \,+\,\frac{\|\bone_{A\setminus B}\|}{\|\bone_{B}\|}\,,\]
which can be rewritten as
\[
\frac{\|\bone_A\|}{\|\bone_{B}\|}\,\leq \,\tfrac32\,\frac{\|\bone_{A\setminus B}\|}{\|\bone_{B}\|}.
\]
Now set $\tA:=(A\setminus  B)\cup C$, for any  $C$, disjoint with
$B$, such that $|\tA|=|B|$. Then \[ \tfrac12\,{\mu(N)}\,\leq
\frac{\|\bone_A\|}{\|\bone_{B}\|}\,\leq
\,\tfrac32\,\frac{\|\bone_{A\setminus B}\|}{\|\bone_{B}\|}\,\leq
\,\tfrac32K\,\frac{\|\bone_{\tA}\|}{\|\bone_{B}\|},\] which gives
the desired result since  $\tA\cap B=\emptyset$.
\end{proof}

\Proofof{Proposition \ref{proCmu}}
Consider sets $A$ and $B$ as in the lemma,
and take any set $C$, disjoint with $A\cup B$, such that $|C|=N-|A|=N-|B|$.
Choosing $x=(1+2\e)\bone_B+ (1+\e)\bone_C+\bone_A$ we have
\[
\|x-G_N(x)\| \,=\,\|\bone_A\|\geq
\tfrac1{3K}\,\mu(N)\,\|\bone_B\|\geq
\tfrac1{(1+2\e)3K}\,\mu(N)\tilde\sigma_N(x),\] which proves the result
when $\e\to 0$.\ProofEnd

To establish the lower bound in Theorem \ref{Th1} it remains to show the following.

\begin{proposition}\label{Pro4}
For any basis (not necessarily quasi-greedy) we have \Be C_N\geq k_N/4.\label{4}\Ee
\end{proposition}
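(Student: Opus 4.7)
The plan is to construct, for every $\e>0$, an explicit test vector $\tilde x\in\SX$ satisfying $\|\tilde x-G_N(\tilde x)\|/\sigma_N(\tilde x)\ge k_N-1-\e$. Combining this with the universal lower bound $C_N\ge 1$ (which is immediate from $\sigma_N(x)\le\|x-G_N(x)\|$ since $G_N(x)\in\Sigma_N$) will give $C_N\ge\max\{1,k_N-1\}\ge k_N/4$ for every value of $k_N$.

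The idea is to ``hide'' the optimal projection witness behind a set of $N$ large decoy coefficients that greedy is forced to pick. Concretely, I would first use the definition of $k_N$ to choose $A\subset\SN$ with $|A|\le N$ and $\|S_A\|\ge k_N-\e$, and then pick a unit vector $w\in\SX$ with $\|S_Aw\|\ge k_N-\e$. After a routine truncation of the basis expansion of $w$, I may assume $\supp(w)$ is finite. Writing $w=u+v$ with $u=S_Aw$, the reverse triangle inequality yields $\|v\|\ge k_N-1-\e$.

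Next I extend $A$ to $A'\subset\SN$ of cardinality exactly $N$ by appending indices $B\subset A^c\setminus\supp(v)$ (possible since $\supp(v)$ is finite, so its complement in $A^c$ is infinite). Picking $t>0$ large enough that the coefficients of $\tilde x:=w+t\bone_{A'}$ on $A'$ all exceed, in modulus, every coefficient of $\tilde x$ on $\supp(v)$ (which is automatic for $t>2M$, where $M$ is the uniform bound on $|a_j(w)|$ coming from continuity of the biorthogonal functionals), the greedy algorithm must select exactly the set $A'$. A direct computation then gives $G_N(\tilde x)=S_{A'}(\tilde x)=u+t\bone_{A'}$, whence $\tilde x-G_N(\tilde x)=v$ and $\|\tilde x-G_N(\tilde x)\|=\|v\|\ge k_N-1-\e$. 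For the $\sigma_N$-bound, the choice $y:=t\bone_{A'}\in\Sigma_N$ (which has exactly $|A'|=N$ nonnull coefficients) yields $\tilde x-y=w$, so $\sigma_N(\tilde x)\le\|w\|=1$. Taking the ratio completes the argument, and sending $\e\to 0$ gives $C_N\ge k_N-1$.

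The one point that demands care is the finite-support reduction of $w$: it is needed to guarantee that $A^c\setminus\supp(v)$ is large enough to extend $A$ to a set of cardinality $N$ disjoint from $\supp(v)$, and to ensure the uniform coefficient bound $M$ is attained on the truncated vector. This is routine because the basis expansion of $w$ converges in norm, so truncating to sufficiently many terms and renormalizing loses only an extra $O(\e)$ in the key estimate $\|S_Aw\|/\|w\|\ge k_N-O(\e)$. The rest of the argument is then just the explicit calculation described above.
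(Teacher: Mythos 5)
Your proposal is correct and is essentially the paper's own argument: both proofs hide the witness $x$ of $\|S_A\|\approx k_N$ behind large coefficients on an $N$-element set $\tA\supset A$, so that greedy removes exactly $S_{\tA}$ leaving the residual $x-S_A(x)$, while the competitor supported on $\tA$ shows $\sigma_N\leq\|x\|$. The only (cosmetic) differences are that you add $t\bone_{\tA}$ to $x$ rather than to $x-S_A(x)$, and you take a near-optimal witness (yielding $C_N\geq k_N-1$) where the paper settles for a factor-$2$ witness and the cruder bound $k_N/4$.
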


 \begin{proof}
Assume that $k_N\geq4$ (otherwise \eqref{4} is trivial). For fixed
$N$ find $A\subset\SN$ with $|A|\leq N$ and $x\in\SX$ such that
$\|S_A(x)\|>(k_N/2)\|x\|$. We may assume that $x=\sum_{j\in
B}x_j\be_j$ with $B$ finite and $\|x\|=1$. Note that
$\|x-S_A(x)\|\geq \|S_A(x)\|-\|x\|\geq k_N/4$.

Take any number $r>\max|x_j|$, and set $y=x-S_A(x)+r\bone_\tA$.
Here $\tA$ is any set of cardinality $N$ containing $A$ and (if
necessary) some indices in $B^c$. Then
\[
\|y-G_N(y)\|=\|x-S_A(x)\|\geq k_N/4.\] On the other hand, since
$r\bone_\tA-S_A(x)\in\Sigma_N$ we have
\[
\sigma_N(y)\leq \|y-(r\bone_\tA-S_A(x))\|=\|x\|=1,\] which gives
\eqref{4}.
\end{proof}

\section{Proof of Theorem \ref{Pro2}}

The upper bound  $D_N\leq 2k_N$ is elementary. Indeed, let $x\in\SX$
and $p = \sum_{k\in P} b_k \be_k\in\Sigma_N$ with $|P|=N$. Then,
\[ \tilde \sigma_N (x) \leq \|x- S_P(x)\| \leq \|x-p\| +
\|S_P(x-p)\|\,\leq \,(1+k_N)\|x-p\|.\] Taking the infimum over all $p\in\Sigma_N$
we obtain\[ \tilde \sigma_N (x) \leq (1+k_N)\sigma_N(x)\,,\]
which proves $D_N\leq 2k_N$.

For the converse we argue as in the proof of Proposition \ref{Pro4}.
That is, we choose an  $x=\sum_{j\in B}x_j\be_j$ with $\|x\|=1$, and
a set $A$ so that $\|x-S_A(x)\|\geq k_N/4$, and we let
$y=x-S_A(x)+r\bone_\tA$ as before. This time we shall choose
$r>(2+k_N+k_{2N})2\bc$, where $\bc$ is the basis constant, and we
shall prove that, with this choice
\[
\tsigma_N(y)\,\geq\, \tfrac{k_N}4\,\sigma_N(y),
\]
which clearly implies $D_N\geq k_N/4$.

As shown before, $\sigma_N(y)\leq1$, so we need to prove that
\Be
\tsigma_N(y)=\inf_{|C|\leq N}\|y-\sum_{(B\setminus A)\cap C}x_j\be_j-r\bone_{\tA\cap C}\|\geq k_N/4.
\label{aux5}\Ee
Suppose we are given one such set $C$ which is not equal to $\tA$. Then there must be some
$j_0\in\tA\setminus C$, and we would have
\Beas
\|y-\sum_{(B\setminus A)\cap C}x_j\be_j-r\bone_{\tA\cap C}\| &  = &
\|\sum_{B\setminus (A\cup C)}x_j\be_j+r\bone_{\tA\setminus C}\|\\
& \geq & r\,\big\|\bone_{\tA\setminus C}\big\|\,-\, \big\|\sum_{B\setminus (A\cup C)}x_j\be_j\big\|\\
& \geq & \tfrac r{2\bc} \|\be_{j_0}\| \,-\, \big\|x-S_{A\cup
C}(x)\big\|\,\geq \, \tfrac r{2\bc}\,-\, (1+k_{2N})>1+k_N,\Eeas
where in the third line we have used that $e_{j_0} =
P_{j_0}(\bone_{\tA\setminus C}) - P_{j_0 -1}(\bone_{\tA\setminus
C})$, and the partial sums operators $P_j$ have norm bounded by
$\bc$ . On the other hand, if we use $C=\tA$ we obtain a better
estimate
\[
\big\|y-r\bone_{\tA}\big\|=\big\|x-S_A(x)\big\|\,\leq\,1+k_N.\]
Therefore,
\[
\tsigma_N(y)=\big\|y-r\bone_{\tA}\big\|=\big\|x-S_A(x)\big\|\,\geq
\,k_N/4\,,\]
proving \eqref{aux5}.

\section{An upper bound for $k_N$}

We prove a bound for the constants
$k_N$ when $\{\be_j\}$ is a quasi-greedy basis.

\begin{theorem}\label{Th3}
If the basis is quasi-greedy, there exists $c>0$ such that \Be k_N
\leq c \log N,\quad \forall\;N=2,3,... \label{3}\Ee
\end{theorem}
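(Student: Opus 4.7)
The plan is to fix $x\in\SX$ with $\|x\|=1$ and a set $A\subset\SN$ with $|A|\leq N$, and bound $\|S_A(x)\|$ by $c\log N$ with $c$ depending only on the quasi-greedy constant $K$ and on the constants in \eqref{dil}. The idea is to split $A$ dyadically by the magnitudes of $a_j(x)$, producing $O(\log N)$ levels each of which will contribute a bounded amount after combining \eqref{dil} with \eqref{qg-constant}.

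Set $M:=\max_j|a_j(x)|$, which satisfies $M\leq K$ since $M=\|G_1(x)\|$ is controlled by \eqref{qg-constant}. Write $A=A'\cup A''$ with $A':=\{j\in A:|a_j(x)|>M/N\}$. The tail $A''$ is immediate: the upper half of \eqref{dil} together with $\|\bone_{A''}\|\leq |A''|\leq N$ (since the basis is normalized) gives $\|S_{A''}(x)\|\leq c_2(M/N)N\leq c_2 K$. For $A'$, decompose $A'=\bigsqcup_{k=0}^{K_0}A_k$ with $A_k:=\{j\in A':M2^{-k-1}<|a_j(x)|\leq M2^{-k}\}$ and $K_0\leq\lceil\log_2 N\rceil$. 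For each $k$, introduce $B_k:=\{j:|a_j(x)|>M2^{-k-1}\}\supset A_k$; since the coefficients of $x$ indexed by $B_k$ are exactly its $|B_k|$ largest in modulus, $\sum_{j\in B_k}a_j\be_j=G_{|B_k|}(x)$ has norm at most $K$, and the lower half of \eqref{dil} then forces $\|\bone_{B_k}\|\leq 2K\cdot 2^k/(c_1M)$.

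The key step is the inequality $\|\bone_{A_k}\|\leq K\|\bone_{B_k}\|$: since all basis coefficients of $\bone_{B_k}$ equal $1$ on $B_k$ and $0$ off $B_k$, any permutation $\pi$ listing $A_k$ before $B_k\setminus A_k$ satisfies \eqref{greedy} for $\bone_{B_k}$, which makes $\bone_{A_k}=G_{|A_k|}^\pi(\bone_{B_k})$ a legitimate greedy sum, so \eqref{qg-constant} applies. Combining with the upper half of \eqref{dil} yields $\|S_{A_k}(x)\|\leq c_2M2^{-k}\|\bone_{A_k}\|\leq 2c_2K^2/c_1$, a bound \emph{independent of $k$}. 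Summing the $O(\log N)$ dyadic contributions together with the estimate for $\|S_{A''}(x)\|$ proves \eqref{3}.

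I expect this last step to be the main conceptual obstacle: a naive bound $\|\bone_{A_k}\|\leq k_{|A_k|}\|\bone_{B_k}\|$ for an arbitrary subset $A_k\subset B_k$ would invoke the very quantity we are trying to estimate, producing a circular argument. The resolution exploits the degeneracy of $\bone_{B_k}$'s coefficient sequence, which turns truncation onto any subset into a legitimate TGA output and lets the hypothesized \eqref{qg-constant} replace the unknown $k_{|A_k|}$; this is the one place in the proof where the indicator structure is essential and cannot be dispensed with.
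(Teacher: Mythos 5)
Your proof is correct and follows essentially the same route as the paper's: a dyadic decomposition of $A$ by coefficient magnitude into $O(\log N)$ bands plus an $\ell^1$-controlled tail, with each band bounded by a constant via \eqref{dil} combined with the constant-coefficient inequality $\|\bone_{A_k}\|\leq K\|\bone_{B_k}\|$ that quasi-greediness supplies (your tie-breaking justification of this is exactly the paper's Lemma \ref{lap.1}). The only cosmetic difference is that you compare each band $A_k$ to the superlevel set $B_k$, which is a single greedy projection, whereas the paper compares it to the shell $F_k$, a difference of two greedy projections; the resulting constants are of the same order.
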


This was essentially shown in \cite[Lemma 8.2]{DKK} (see also \cite[Lemma 2.3]{DST}),
but we include a self-contained proof for completeness.
We need two easy lemmas.

\begin{lemma}\label{L4.1}
 Let $(\SX, \{\be_j\})$ be quasi-greedy. Consider $x=\sum_{i} a_i
 \be_i \in \SX$ and $0 \leq \alpha < \beta < \infty\,.$ Let $F=\{i:
 |a_i| \in (\alpha, \beta]\}.$ Then $\|S_F(x)\| \leq 2K \|x\|\,.$
\end{lemma}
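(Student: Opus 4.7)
The plan is to express $S_F(x)$ as the difference of two greedy sums of $x$ and to bound each of them by the quasi-greedy estimate \eqref{qg-constant}. Setting
\[ P = \{i : |a_i| > \beta\}, \qquad Q = \{i : |a_i| > \alpha\}, \]
one has $F = Q\setminus P$ and $S_F(x) = S_Q(x) - S_P(x)$. Since $\beta > \alpha \geq 0$ forces $\beta>0$, and since $a_i\to 0$, the set $P$ is finite; let $m = |P|$.

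The key step is to choose a bijection $\pi$ of $\SN$ satisfying \eqref{greedy} whose first $m$ entries enumerate $P$ (in decreasing order of $|a_i|$), followed by the indices of $F$ (also in decreasing order), and finally the remaining support of $x$. This is a valid greedy ordering because coefficients in $P$ exceed $\beta$, those in $F$ lie in $(\alpha,\beta]$, and the rest have modulus $\leq \alpha$, with ties broken arbitrarily. With this $\pi$, one has $G_m(x) = S_P(x)$, whence $\|S_P(x)\| \leq K\|x\|$ by \eqref{qg-constant}.

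It remains to control $S_Q(x)$, for which I would split into two cases. If $|F|<\infty$, set $m' = m+|F|$; by construction $G_{m'}(x) = S_Q(x)$, so $\|S_Q(x)\|\leq K\|x\|$ and therefore $\|S_F(x)\|\leq 2K\|x\|$. If $|F|=\infty$, then necessarily $\alpha = 0$, so $Q$ equals the support of $x$ and $S_Q(x) = x$; moreover, for every $n\geq m$ the difference $G_n(x) - S_P(x)$ is a partial sum of $S_F(x)$ along the greedy order inside $F$. The quasi-greedy convergence $G_n(x)\to x$ then gives $G_n(x) - S_P(x) \to S_F(x)$ in norm, and passing to the limit yields
\[ \|S_F(x)\| \;\leq\; \limsup_n \|G_n(x)\| + \|S_P(x)\| \;\leq\; 2K\|x\|. \]

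The argument is essentially bookkeeping once the permutation is chosen correctly; the only nontrivial point is the limiting step in the case $|F|=\infty$, where the set $F$ is not realized by any single finite greedy sum, so one really needs the full quasi-greedy convergence (not just uniform boundedness of greedy sums) to recover $S_F(x)$ as a norm limit.
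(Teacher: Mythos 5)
Your proof is correct and takes essentially the same route as the paper: write $S_F(x)=S_Q(x)-S_P(x)$ with $Q=\{i:|a_i|>\alpha\}$ and $P=\{i:|a_i|>\beta\}$, observe that each of these is a greedy sum bounded by $K\|x\|$, and apply the triangle inequality. The only difference is that you carefully handle the edge case $\alpha=0$ (where $Q$ may be infinite), a point the paper leaves implicit; there one can also just note $S_Q(x)=x$ and use $\|x\|\leq K\|x\|$ directly.
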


\begin{proof}
Let $G=\{i:  |a_i|> \alpha \}$ and  $H=\{i:  |a_i| > \beta\}.$ By
the definition of quasi-greediness, $\max\{\|S_G(x)\|, \|S_H(x)\|\}
\leq K \|x\|\,.$ However, $S_F(x)= S_G(x) - S_H(x)\,.$ Apply the
triangle inequality to finish the proof.
\end{proof}

\begin{lemma}\label{L4.2}
 Let $(\SX, \{\be_j\})$ be quasi-greedy. Consider $x=\sum_{i} a_i
 \be_i \in \SX$ and $0 < \alpha < \beta < \infty\,.$ Then, for any $P \subset F=\{i:
 |a_i| \in (\alpha, \beta]\},$ we have
 \[\|S_P(x)\| \leq \,K\,\frac{c_2}{c_1}\,\frac{\beta}{\alpha}\, \|S_F(x)\|\,,\]
where $c_1, c_2$ are as in \eqref{dil}.
\end{lemma}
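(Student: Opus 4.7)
The plan is to bound $\|S_P(x)\|$ from above using the right half of \eqref{dil} applied to $x$ restricted to $P$, and bound $\|S_F(x)\|$ from below using the left half of \eqref{dil}; the missing link between the two will be the quasi-greedy estimate $\|\bone_P\|\leq K\|\bone_F\|$.

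Concretely, since $\max_{i\in P}|a_i|\leq\beta$ (because $P\subset F$), the second inequality of \eqref{dil} gives
\[
\|S_P(x)\|=\Bigl\|\sum_{i\in P}a_i\be_i\Bigr\|\,\leq\,c_2\,\beta\,\|\bone_P\|.
\]
Similarly, since $\min_{i\in F}|a_i|>\alpha$, the first inequality of \eqref{dil} yields
\[
\|S_F(x)\|=\Bigl\|\sum_{i\in F}a_i\be_i\Bigr\|\,\geq\,c_1\,\alpha\,\|\bone_F\|.
\]

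It remains to compare $\|\bone_P\|$ with $\|\bone_F\|$. Here is the step I expect to be the crux (though it turns out to be short): all nonzero coefficients of $\bone_F$ equal $1$, so for a suitable permutation $\pi$ satisfying \eqref{greedy} the greedy sum of order $|P|$ of $\bone_F$ is exactly $\bone_P$. Hence by \eqref{qg-constant},
\[
\|\bone_P\|=\|G_{|P|}(\bone_F)\|\,\leq\,K\,\|\bone_F\|.
\]

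Combining the three displayed inequalities,
\[
\|S_P(x)\|\,\leq\,c_2\beta\,\|\bone_P\|\,\leq\,K c_2 \beta\,\|\bone_F\|\,\leq\,\frac{K c_2}{c_1}\,\frac{\beta}{\alpha}\,\|S_F(x)\|,
\]
which is the claimed estimate.
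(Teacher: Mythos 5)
Your proof is correct and follows exactly the same three-step chain as the paper's: the upper half of \eqref{dil} on $P$, the quasi-greedy comparison $\|\bone_P\|\leq K\|\bone_F\|$, and the lower half of \eqref{dil} on $F$. Your justification of the middle step via a tie-breaking greedy permutation is fine, since $K$ is defined as the least constant valid for \emph{all} permutations satisfying \eqref{greedy} (the paper isolates this fact as Lemma \ref{lap.1}).
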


\begin{proof}
We use (\ref{dil}) (see \cite{DKKT} or Proposition \ref{p1.ap}
below). We have
\[\|S_P(x)\| = \|\sum_{i\in P} a_i \be_i \| \leq c_2\beta
\|\sum_{i\in P} \be_i \|\,.\] By quasi-greediness \[ \|\sum_{i\in
P} \be_i \| \leq K \|\sum_{i\in F} \be_i \|\,.\] Finally, \[
\|\sum_{i\in F} \be_i \| \leq \frac{1}{c_1\alpha} \|\sum_{i\in F}
a_i \be_i \| = {\frac{1}{c_1\alpha}} \| S_F(x)\|\,.\]
\end{proof}

We now prove Theorem \ref{Th3}. Take $|A|=N\geq 2\,.$ Let
$x=\sum_i a_i \be_i\,.$ By scaling we may assume $\max_i |a_i| =
1$. Under this assumption
 \Be \|x\| \geq (1/K). \label{4.1}\Ee In fact, if $\max_i |a_i| = |a_{i_0}| =
1$ for some $i_0$, then $ 1=|a_{i_0}| \|\be_{i_0}\| =\|G_1(x)\|
\leq K\|x\|$, proving (\ref{4.1}).

Let $\ell\in \SN$ so that $2^{-\ell} \leq \frac{1}{N} <
2^{1-\ell}.$ Represent $A$ as a disjoint union of sets
$\cup_{k=1}^\ell A_k$, where $A_k = \{i\in A: 2^{-k} <|a_i| \leq
2^{1-k}\}$, {\small $1\leq k \leq \ell -1$}, and $A_\ell = \{i\in
A: |a_i| \leq 2^{1-\ell}\}$. Then, by (\ref{4.1}),
 \[\|S_{A_\ell}x\| \leq \sum_{i\in A_\ell} |a_i| \|\be_i\| \leq
 2^{1-\ell} |A_\ell| \leq \frac{2|A_\ell| }{N} \leq 2 \leq 2K
 \|x\|.\]
For $1\leq k \leq \ell -1$, let $F_k = \{i\in \SN: 2^{-k} <|a_i|
\leq 2^{1-k}\}.$ By Lemmas \ref{L4.2} and \ref{L4.1}
 \[ \|S_{A_k} (x)\| \leq \,c'\, \|S_{F_k}(x)\| \leq \,c''\, \|x\|,\]
with $c''=4K^2c_2/c_1$.  Therefore,
 \[ \|S_{A} (x)\| \leq  \sum_{k=1}^\ell \|S_{A_k} (x)\| \leq
 (2K + c'' (\ell-1))  \|x\|.\]
As $\ell - 1 \leq \log_2 N$, we have shown \eqref{3} with $c$ of the
order $K^2c_2/c_1=O(K^5)$.

\section{Examples}
\label{examples}

We compute (asymptotically) the Lebesgue-type constants $C_N$ for some
explicit examples of quasi-greedy democratic bases. Notice that,
in view of Theorem \ref{Th1}, for such bases we have
\[
C_N\,\approx\,k_N =\sup_{|A|\leq N}\|S_A\|.
\]

\bline{\bf Example 1:} \emph{the Lindenstrauss basis}.
 Consider the system of vectors in $\ell^1$ defined by\[
\bx_n=\be_n\,-\tfrac12\big(\be_{2n}+\be_{2n+1}\big),\quad
n=1,2,\ldots
\]
where $\be_n$ denotes the canonical basis. It is known that
$\{\bx_n\}_{n=1}^\infty$ is a monotone basic sequence in $\ell^1$,
and a conditional basis in its closed linear span $D$; see e.g.
\cite[p. 27]{LZ} or \cite[p. 455]{Sin}. The space $D$ was introduced
by J. Lindenstrauss in \cite{L} and has other interesting properties
in functional analysis. In particular, it was shown by Dilworth and
Mitra \cite{DM} that $\{\bx_n\}_{n=1}^\infty$ is a quasi-greedy
basis in $D$.

Here we show that
\[
k_N\,\approx \, \log N,
\]
which in particular gives a direct proof that the Lindenstrauss
basis is not unconditional. By Theorem \ref{Th3}, it suffices
to show the lower bound. We first notice that
\[
\big\|\sum_{k=1}^n
b_k\bx_k\big\|_{\ell^1}\,=\,|b_1|\,+\,\sum_{k=2}^n\big|b_k-\tfrac12\,b_{\lfloor
\frac
k2\rfloor}\big|\,+\,\frac12\,\sum_{k=n+1}^{2n+1}\big|b_{\lfloor
\frac k2\rfloor}\big|.\] Now consider
\[
\Ts \bx= \sum_{j=0}^{n-1}\sum_{2^j\leq k<2^{j+1}}2^{-j}\bx_k\,.
% b_k=2^{-j}, \quad\mbox{if } 2^j\leq k<2^{j+1},\; \mbox{\small  $j=0,1,\ldots, n-1$}
\]
Clearly,
\[
\|\bx\|_{\ell^1}\,=\,1\,+\,\sum_{j=1}^{n-1}\sum_{2^j\leq
k<2^{j+1}}\big|2^{-j}-\tfrac12\,2^{-(j-1)}\big|\,+\,\sum_{2^n\leq
k<2^{n+1}}2^{-n}\,=\,2.\] Now choose $A=\cup_{{0\leq j<n}\atop{j\;
{\rm even}}}[2^j,2^{j+1})\cap\mathbb{N}$, so that $N:=|A|\approx
2^n$. Then, if say $n$ is odd,
\[
\big\|S_A\bx\big\|_{\ell^1}\,=\,1\,+\,\sum_{j=1}^{n}\sum_{2^j\leq
k<2^{j+1}}2^{-j}\,=\,n+1\,\approx\,\log N.\] Thus
$k_N\geq\|S_A\|\gtrsim\log N$, proving our claim.

 \

{\bf Example 2.} An important example of quasi-greedy basis arises
in the context of $BV(\SR^d)$, $d>1$. This space is not separable,
so we consider the closed linear span $\SX$ of the $d$-dimensional
(non-homogeneous) Haar system in the $BV$-norm
\[
\|f\|_{BV(\SR^d)}=\|f\|_{L^1(\SR^d)}+|f|_{BV(\SR^d)},
%\quad\mbox{with } |f|_{BV}=\int_{\SR^d} |{\mathrm{grad}} f|.
\]
where $|f|_{BV}$ is the total variation of the distributional gradient $\nabla f$ (as defined e.g. in \cite[(1.1)]{CDDD}).
It follows from the
results in \cite{CDPX,Wo2} that the Haar system is a quasi-greedy
democratic basis in $\SX$ (see e.g. \cite[Thm
10]{Wo2})\footnote{Democracy is not explicitly stated, but follows
easily from the inclusions $\ell^1\hookrightarrow BV
\hookrightarrow\ell^{1,\infty}$ as in \cite[p. 239]{CDDD}. The
fact that the Haar system is a basic sequence in $BV$ (hence a
basis in its closed linear span $\SX$), is a consequence of the
uniform boundedness of the projections, see \cite[Corollary
12]{Wo2}. Finally, it is a seminormalized system with the
normalization in \eqref{hjk}; see \cite[(1.6)]{CDDD}.}. We claim
that in this case
\[ k_N \approx \log N.
\]
It suffices to show the lower bound. For this we will argue as in
\cite{bechler}, to find functions $f_N\in\Sigma_{2N}$ with
$\|f_N\|_{BV(\SR^d)}=O(1)$, and sets $A_N$ with $|A_N|=N$ such that
$|S_{A_N}(f_N)|_{BV} \geq c\, \log N$.

To do this carefully we first set some notation. The Haar
functions are defined by \Be
h^\be_{j,\bk}(x)=2^{j(d-1)}\prod_{\ell=1}^d
h^{e_\ell}(2^jx_\ell-k_\ell), \quad j\geq 0,
\;\bk\in\SZ^d,\;\be\in\{0,1\}^d,\;x\in\SR^d,\label{hjk}\Ee where
$h^0=\chi_{[0,1)}$ and
$h^1=\chi_{[0,\frac12)}-\chi_{[\frac12,1)}$. With this definition
the Haar system  is semi-normalized, i.e.
$c_1<\|h^\be_{j,\bk}\|_{BV(\SR^d)}<c_2$. The (non-homogeneous)
Haar system is obtained restricting to indices $\la=(j,\bk,\be)$
with $\be\not=\bz$ when $j>0$. We sometimes write it
$\mathcal{H}=\{h_\la\}_{\la\in\La}$. As explained above, it is a
quasi-greedy democratic basis in $\SX$, the $\|\cdot\|_{BV}$-closure
of its linear span.

Following \cite{bechler} we consider the function
$f=\chi_{[0,\frac13]\times[0,1]^{d-1}}$ and $f_n=P_{2n}f$, where
$P_J$ denotes the projection onto $V_{J}=\span\{h^\be_{j,\bk}\,|\,
j\leq J\}$. The Haar coefficients of $f$ are easily computed,
leading to the expression\Be f_n=\tfrac13 \chi_{[0,1)^d}
\,+\,\tfrac13\sum_{j=0}^{2n}2^{-j(d-1)}\sum_{{k_2,\ldots,k_d}\atop{0\leq
k_\ell<2^j}} h^{(1,0,\ldots,0)}_{j,\,(k_1(j),k_2,\ldots,k_d)}
\label{fn}\Ee where $\koj$ denotes the only integer such that
$\frac13\in (\frac k{2^{j}},\frac{k+1}{2^{j}})$, explicitly given by
\Be
\koj=\left\{\Ba{ll}\frac{2^j-1}3 & \mbox{ if $j=$ even }\\
\frac{2^j-2}3 & \mbox{ if $j=$ odd.}\Ea\right.\label{k1j}\Ee Using
for instance \cite[Corollary 12]{Wo2} one justifies that
$\|f_n\|_{BV}=\|P_{2n}f\|_{BV}=O(1)$. Note also that
$f_n\in\Sigma_{2N}$ with $N=
%1+\sum_{j=0}^{2n}2^{j(d-1)}=
O(2^{2n(d-1)})$.

Consider now the set $A_n$ consisting only of the indices in
\eqref{fn} with $j$ even, so that $|A_n|=N$ and\[
S_{A_n}(f_n)=\tfrac13\sum_{{j=0}\atop{j\; {\rm
even}}}^{2n}2^{-j(d-1)}\sum_{{k_2,\ldots,k_d}\atop{0\leq
k_\ell<2^j}} h^{(1,0,\ldots,0)}_{j,\,(k_1(j),k_2,\ldots,k_d)}\,.\]
 To estimate $|S_{A_n}(f_n)|_{BV}$ from below we shall use the
following linear functional
\[
u\in
BV\longmapsto\,\Phi(u)=\int_{[\frac13,\infty)\times\SR^{d-1}}\partial_{x_1}u.
\]
%As $|\partial_{x_1}u|$ is a finite measure, $\Phi$ is contractive.
 This is bounded in $BV$ since $\partial_{x_1}u$ defines a
 finite measure.
Thus, \Be |S_{A_n}(f_n)|_{BV} \geq
\big|\Phi\big(S_{A_n}(f_n)\big)\big|. \label{aux4}\Ee On the other
hand, when $(j,\bk,\be)\in A_n$ we can compute explicitly \Beas
\Phi(h^\be_{j,\bk}) &  = &
\int_{[\frac13,\infty)}2^j(h^1)'(2^jx_1-\koj)\,dx_1\\
& = & (h_1)'\,\big[\tfrac{2^j}3-\koj,\infty\big)\\
& = &
\big(\dt_0-2\dt_{1/2}+\dt_{1}\big)\,\big[\tfrac{2^j}3-\koj,\infty\big)\,=\,-1,\Eeas
where in the last step we have used \eqref{k1j} for  $j=$ even.
Thus\[ \big|\Phi\big(S_{A_n}(f_n)\big)\big|=n/3\,\geq\,c\,\log N
\]
which together with \eqref{aux4} proves our assertion.

\

{\bf Example 3.} We now show that $C_N\approx k_N$ may be strictly
smaller than $\log N$. Modifying an example in \cite{KT}, for
$1<p<\infty$ we let $\SX_p $ be the closure of $\span\{\be_j\}$
with the norm \Be |\!|\!|(x_j)|\!|\!|:=\,
\max\Big\{\,\|(x_j)\|_{\ell^{p}}\,,\,
\sup_{m\geq1}\big|\Ts\sum_{n=1}^m
\frac{x_n}{n^{1/p'}}\big|\,\Big\}. \label{bp}\Ee A simple
generalization of the arguments in \cite{KT} shows that the
canonical basis is quasi-greedy and democratic in $\SX_p$. We
claim that, in this example, \Be C_N \approx k_N \approx (\log
N)^{1/p'}. \label{CNbp}\Ee Clearly, for
$x=\sum_{n=1}^\infty\be_n\in\SX_p$
\[\big\|S_Ax\big\|_{\ell^p}\leq \|x\|_{\ell^p}\,\leq\,|\!|\!|x|\!|\!|.\]
Also, if for simplicity we write
$\|x\|_{b^p}:=\sup_{m\geq1}\big|\Ts\sum_{n=1}^m
\frac{x_n}{n^{1/p'}}\big|$, using H\"older's inequality we have
\Beas \big\|S_Ax\big\|_{b^p} & = &
\sup_{m\geq1}\Big|\sum_{{n=1}\atop{n\in A}}^m
\frac{x_n}{n^{1/p'}}\Big|\\ & \leq &
\|x\|_{\ell^p}\,\sup_{m\geq1}\Big(\sum_{{n=1}\atop{n\in
A}}^m\frac1n\Big)^{1/p'} \lesssim
\,|\!|\!|x|\!|\!|\,\big(\log|A|\big)^{1/p'}\,.\Eeas These two
inequalities give the upper bound in \eqref{CNbp}.

On the other hand, testing with
$x=\sum_{n=1}^{2N}\frac{(-1)^n\be_n}{n^{1/p}}$ and
$A=\{1,\ldots,2N\}\cap2\SZ$ one easily sees that
\[
|\!|\!|x|\!|\!| \approx (\log N)^{1/p}\mand
|\!|\!|S_A(x)|\!|\!|\approx \log N.\] This gives $k_N\gtrsim (\log
N)^{1/p'}$, establishing \eqref{CNbp}.

\ \

{\bf Example 4.} Above, we considered examples of quasi-greedy
bases. We provide an example of a non quasi-greedy basis where
\[k_n, C_n, D_n \gtrsim n, \qquad n=1, 2, \dots .\]
 Consider the sequence space $\ell^1$ with the difference
basis
\[
\bx_1 =\be_1,\quad \bx_n=\be_n\,-\be_{n-1},\quad n=2,3,\ldots
\]
Clearly, for finitely supported scalars $(b_n)$, one has
\[
\|\sum_n b_n \bx_n\| = |b_1|+ \sum_{n=1}^\infty |b_{n+1} - b_n| .
\]
In particular, this basis is normalized with $\|\bx_n\|=2$.

Let $y = \sum_{n=1}^{2N} \bx_n$, so that $\|y\|=1$. Taking
$A=\{2,4, \cdots, 2N\}$ we obtain \[\|S_A(y)\| = \|\sum_{n=1}^{N}
\bx_{2n} \|= 2N\,.\] Thus $k_N \geq  {\|S_A (y)\|}/{\|y\|} \geq
2N.$ By Proposition \ref{Pro4} and Theorem \ref{Pro2}, we then
conclude that $C_N \gtrsim k_N\approx D_N\gtrsim N$.

%{\color{red} \texttt {Define the following norm in $\ell_1:$
%$$
%\|\sum_i a_i e_i\| = \sup_k \sum_{j=1}^k |a_{j+1} - a_{j}| ,
%$$
%where the $\sup$ is taken over all $k=1,2,3, \dots\,.$ Comment: That
%the 1-James space is $\ell_1$ with the above norm was pointed out to
%us during last week meeting in El Escorial.}}  Let $y =
%\sum_{j=1}^{2n} \be_j\,.$ We have $\|y\|=1.$ Taking $A=\{2,4,
%\cdots, 2n\}$ we obtain \[\|S_A(y)\| = \|\sum_{j=1}^{n}
%{\color{red}\be_{2j}} \|= {\color{red} 2n}\,.\] Thus $k_n \geq \frac
%{\|S_A (y)\|}{\|y\|} \geq {\color{red} 2n}.$ {\color{red} \texttt
%{Check these calculations}}. Since $C_n \geq k_n/4$, by Proposition
%\ref{Pro4}, we also have $C_n \gtrsim n.$
%
%We give a direct proof that $D_n \gtrsim n$; consider
%$$x = \sum_{i=0}^n \sum_{j=2+i(n+1)}^{(i+1)(n+1)} \be_j.$$
%Then $\tilde{\sigma}_n(x) \sim n$. On the other hand,
%$$
%\sigma_n(x) \leq \Big\|x + \sum_{i=1}^n \be_{i(n+1)+1} \Big\| =
%\Big\|\sum_{j=2}^{(n+1)^2} \be_j\Big\| = 2 .
%$$

\

\ {\bf Example 5.}
The last example consists of a general procedure showing that $k_N$ and $\mu(N)$ may
essentially be arbitrary.

Let $\SX$ and $\SY$ be Banach spaces with respective (normalized)
bases $\{\be_j\}$ and $\{\ff_j\}$. We consider the direct sum
space $\SX\oplus\SY$, consisting on pairs $(x,y)\in\SX\times\SY$
with norm given by $\|x\|_\SX+\|y\|_\SY$. Clearly, the
system\footnote{As usual, in $\SX\oplus\SY$ one just writes $x$ in
place of $(x,0)$, and $y$ in place of $(0,y)$.}
$\{\be_1,\ff_1,\be_2,\ff_2,\ldots\}$ is a basis of $\SX\oplus\SY$.
Moreover, we the have the following.

\begin{proposition}
\label{x+y} If $\{\be_j\}_{j=1}^\infty$ is quasi-greedy in $\SX$ and
$\{\ff_j\}_{j=1}^\infty$ quasi-greedy in $\SY$, then
$\{\be_j,\ff_j\}_{j=1}^\infty$ is quasi-greedy in $\SX\oplus\SY$.
Moreover, \Benu \item[(a)] $k^{\SX\oplus\SY}_N\,=\,\max\{k_N^\SX,
k_N^\SY\}$ \item[(b)]
$h_r^{\SX\oplus\SY}(N)\,\approx\,\max\{h_r^\SX(N), h_r^\SY(N)\}$
\item[(c)] $\min\{h_\ell^\SX(N/2),
h_\ell^\SY(N/2)\}\,\lesssim\,h_\ell^{\SX\oplus\SY}(N)\,\leq\,\min\{h_\ell^\SX(N),
h_\ell^\SY(N)\}$.\Eenu
\end{proposition}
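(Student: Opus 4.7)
The plan is to reduce each claim about $\SX\oplus\SY$ to the corresponding property of the component bases. For the interleaved ordering, any index set $A$ in $\SX\oplus\SY$ decomposes canonically as $A=A_1\sqcup A_2$ with $A_1,A_2\subset\SN$ (the $\be$- and $\ff$-indices), and because the direct-sum norm is additive, every quantity $\|S_A(x,y)\|_{\SX\oplus\SY}$ or $\|\sum_A\cdot\|_{\SX\oplus\SY}$ splits as a sum of the corresponding norms in $\SX$ and $\SY$.

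For quasi-greediness I would first observe that if $A$ is a greedy set for $(x,y)=(\sum a_j\be_j,\sum b_j\ff_j)$, then $A_1$ is automatically a greedy set for $x$ in $\SX$, and $A_2$ a greedy set for $y$ in $\SY$: the selection of $A$ applies a single threshold to the combined sequence $\{|a_j|\}\cup\{|b_j|\}$, so the indices of $A_1$ satisfy $|a_j|\geq t$ while those outside satisfy $|a_k|\leq t$. Consequently $\|G_N^{\SX\oplus\SY}(x,y)\|=\|S_{A_1}x\|_\SX+\|S_{A_2}y\|_\SY\leq K_\SX\|x\|_\SX+K_\SY\|y\|_\SY\leq\max\{K_\SX,K_\SY\}\,\|(x,y)\|$, establishing the uniform bound of \eqref{1.4}.

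Part (a) then follows from the same splitting, using the elementary inequality $(a+c)/(b+d)\leq\max\{a/b,c/d\}$ for the upper bound, and testing with $(x,0)$ and $(0,y)$ for the lower bound. Part (b) is analogous: for $|A|=N$, the upper bound $\|\sum_{A_1}\be_j\|_\SX+\|\sum_{A_2}\ff_j\|_\SY\leq h_r^\SX(N)+h_r^\SY(N)\leq 2\max\{h_r^\SX(N),h_r^\SY(N)\}$ handles one direction, and placing all indices on a single side handles the reverse. The upper bound in (c) is immediate by taking $A$ entirely in one component, giving $h_\ell^{\SX\oplus\SY}(N)\leq\min\{h_\ell^\SX(N),h_\ell^\SY(N)\}$.

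The only genuinely delicate point, and the main obstacle, is the lower bound in (c). After splitting $|A|=N$ as $N_1+N_2$ with (say) $N_1\geq\lceil N/2\rceil$, one only has $\|\sum_{A_1}\be_j\|_\SX\geq h_\ell^\SX(N_1)$, but the target is a bound in terms of $h_\ell^\SX(N/2)$. Since $h_\ell$ need not be monotone, I would first establish an almost-monotonicity lemma for quasi-greedy bases: for $N\leq M$, $h_\ell(N)\leq K\,h_\ell(M)$. The proof is short: for any $A$ with $|A|=M$ and any $A'\subset A$ with $|A'|=N$, the vector $\sum_A\be_j$ has all nonzero coefficients equal to $1$, so any subset of $A$ is a greedy set, and quasi-greediness yields $\|\sum_{A'}\be_j\|\leq K\|\sum_A\be_j\|$; taking the infimum over $A$ gives the claim. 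Combining this with the splitting then produces the desired lower bound $h_\ell^{\SX\oplus\SY}(N)\gtrsim\min\{h_\ell^\SX(\lceil N/2\rceil),h_\ell^\SY(\lceil N/2\rceil)\}$.
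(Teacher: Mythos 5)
Your proof is correct and follows essentially the same route as the paper's: split every index set as $A=A_1\sqcup A_2$, use additivity of the direct-sum norm, and handle the lower bound in (c) via the almost-monotonicity of $h_\ell$ for quasi-greedy bases. The only cosmetic difference is that you prove that almost-monotonicity lemma inline, whereas the paper cites it as a known consequence of quasi-greediness (its Lemma \ref{lap.1}, inequality \eqref{ap1}), with the same one-line argument.
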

\begin{proof}
 The proof is elementary. Quasi-greediness follows from
\[
\|G_N(x+y)\|_{\SX\oplus\SY}\,\leq \,\max_{0\leq k\leq N}\,
(\|G_k(x)\|_{\SX}+\|G_{N-k}(y)\|_\SY)\,\lesssim\,\|x\|_\SX+\|y\|_\SY.
\]
The statement (a) is an easy consequence of the identity
\[ k^{\SX\oplus\SY}_N=\,\sup_{|A_1|+|A_2|\leq
N}\sup_{{x\in \SX,y\in\SY}\atop{\|x\|_\SX+\|y\|_\SY\not=0}}
\frac{\|S_{A_1}x\|_\SX+\|S_{A_2}y\|_\SY}{\|x\|_\SX+\|y\|_\SY}.\]
Similarly, (b) follows from\[
h^{\SX\oplus\SY}_r(N)\,=\,\sup_{|A_1|+|A_2|= N} (\|\sum_{i\in
A_1}\be_i\|_\SX+\|\sum_{j\in A_2}\ff_j\|_\SY).
\]
For (c) one uses
\[
h^{\SX\oplus\SY}_\ell(N)\,=\,\inf_{|A_1|+|A_2|= N} (\|\sum_{i\in
A_1}\be_i\|_\SX+\|\sum_{j\in A_2}\ff_j\|_\SY)
\leq\,\min\{h_\ell^\SX(N), h_\ell^\SY(N)\}.
\]
For the lower bound notice that
\[
h^{\SX\oplus\SY}_\ell(N)\,\geq\,\min_{1\leq k\leq
N}\{h_\ell^\SX(k)+
h_\ell^\SY(N-k)\}\,\gtrsim\,\min\{h_\ell^\SX(N/2),
h_\ell^\SY(N/2)\},
\]
where in the last step one splits the cases $k\leq N/2$ and
$k>N/2$, and uses that $h^\SX_\ell$ and $h^\SY_\ell$ are almost
increasing (by quasi-greediness; see \eqref{ap1}).
\end{proof}

As a particular case, consider $\SX$ as in Example 1, so that \Be
k_N^\SX\approx \log N\mand h_\ell^\SX(N)\approx h_r^\SX(N)\approx
N.\label{6x}\Ee Consider also the space $\SY$ given by the closure
of $c_{00}$ with the norm
%consisting of all sequences $(y_j)\in c_0$ such that
 \Be \|(y_j)\|_\SY=\sup_A
\sum_{j \in A} |y_j|/(1+\log |A|)^\al<\infty,\label{6y}\Ee where
$\al>0$ is fixed. One easily checks that
\[k_N^\SY=1\mand h_\ell^\SY(N)=
h_r^\SY(N)= \frac N{(1+\log N)^\al}.\] Combining \eqref{6x},
\eqref{6y} and Proposition \ref{x+y} we see that $\SX\oplus\SY$
has \Be k_N^{\SX\oplus\SY}\approx \log N \mand
\mu^{\SX\oplus\SY}(N)\approx(\log N)^\al.\label{6kN}\Ee

%\regTO{paragraph added}
To show possible applications of our results, construct a quasi-greedy
basis with $k_N \approx \mu(N) \approx \log N$. Theorem \ref{Th1}
shows that $C_N \lesssim \log N$. This is an improvement over
previously known estimates: both \cite[Theorem 2.1]{TYY2011b} and
\cite[Thm 1.1]{H} only yield $C_N \lesssim (\log N)^2$.

\section{Limitations}\label{sec:limits}

One could use Theorem \ref{Th3} to show that a given basis is not
quasi-greedy, by establishing that its $k_N$  constants grow
faster than $c\,\log N$ for any $c>0$. We also know that
$k_N=O(1)$ characterizes unconditional bases. It is then fair to
ask whether the slow growth  $k_N \leq c\log N$ could characterize
quasi-greedy bases. Below we show that it is not the case.

\begin{proposition}\label{prop:growth}
Suppose a sequence $1 \leq c_1 \leq c_2 \leq \ldots$ increases
without a bound (perhaps very slowly). Then there exists a Banach
space $\SX$ with a normalized basis $(\be_i)$ such that $k_{2N}
\leq c_N$, and $(\be_i)$ is not quasi-greedy.
\end{proposition}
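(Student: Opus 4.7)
The plan is to construct $\SX$ as an $\ell^1$-direct sum $\bigl(\oplus_{k=1}^\infty Y_k\bigr)_{\ell^1}$ of finite-dimensional spaces $Y_k$, where each $Y_k$ carries a normalized basis that fails quasi-greedy at level $k$ but whose projection constants $k^{Y_k}_N$ are small enough to keep the global $k^{\SX}_{2N}$ below $c_N$. The concatenation of the finite bases is then a normalized Schauder basis of $\SX$, and, arguing exactly as in Proposition~\ref{x+y}(a) (whose proof extends verbatim to countable $\ell^1$-direct sums), one obtains $k^{\SX}_N=\sup_k k^{Y_k}_N$. The existence of vectors $v_k\in Y_k$ on which the greedy algorithm performs arbitrarily badly will then prevent quasi-greediness globally.

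Concretely, I would take $Y_k=\SR^{m_k}$ with $m_k$ even, endowed with the norm
\[
\|y\|_{Y_k}\,=\,\max\Bigl\{\|y\|_{\ell^\infty},\;\tfrac{1}{w_k}\Bigl|\sum_{i=1}^{m_k}y_i\Bigr|\Bigr\},\qquad w_k:=m_k/k,
\]
so that the canonical basis $e_{k,1},\ldots,e_{k,m_k}$ is normalized in $Y_k$ as soon as $w_k\ge 1$. Applying the triangle inequality separately to both ingredients of the norm gives $k^{Y_k}_N\le\max\{1,\min(N,m_k)/w_k\}=\max\{1,\min(Nk/m_k,k)\}$. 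To force failure of quasi-greediness at level $k$, consider the alternating vector $v_k=\sum_{i=1}^{m_k/2}e_{k,i}-\sum_{i=m_k/2+1}^{m_k}e_{k,i}$. Since the sum functional vanishes on $v_k$, we have $\|v_k\|_{Y_k}=1$; on the other hand, all nonzero coordinates have equal modulus, so it is a legitimate greedy ordering to select the $+1$ coordinates first, producing $G_{m_k/2}(v_k)=\sum_{i=1}^{m_k/2}e_{k,i}$. The sum-functional part of the norm then gives $\|G_{m_k/2}(v_k)\|_{Y_k}=(m_k/2)/w_k=k/2$, and letting $k\to\infty$ shows that the global basis cannot be quasi-greedy.

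For the quantitative bound $k^{\SX}_{2N}\le c_N$, I would split the supremum $\sup_k k^{Y_k}_{2N}$ according to whether $m_k\le 2N$ or $m_k>2N$. Writing $k^*:=k^*(2N)=\max\{k:m_k\le 2N\}$, the first range contributes at most $k^*$, while for $k>k^*$ the defining inequality $m_{k^*+1}>2N$ combined with $w_{k^*+1}=m_{k^*+1}/(k^*+1)$ yields $2N/w_{k^*+1}<k^*+1$; hence $k^{\SX}_{2N}\le k^*+1$. It then suffices to arrange $k^*(2N)\le c_N-1$ for every $N$, which is achieved by choosing, for each $k$, $m_k>2\min\{N:c_N>k\}$ (finite because $c_N\to\infty$); the remaining additive constant can be absorbed by mildly slowing $(c_N)$. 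The main obstacle is that the greedy-failure requirement fixes the ratio $m_k/w_k$ at $k$, while the slow-$k^{\SX}_{2N}$ requirement forces $m_k$ itself to grow rapidly; these constraints are compatible precisely because no quantitative rate of divergence is assumed on $c_N$ beyond $c_N\to\infty$.
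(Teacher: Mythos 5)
Your construction has a genuine gap: the concatenated system is not a Schauder basis of $(\oplus_k Y_k)_{\ell^1}$. In $Y_k$ with the norm $\max\{\|y\|_{\ell^\infty},\tfrac1{w_k}|\sum_i y_i|\}$, the partial-sum projection onto the first $m_k/2$ coordinates applied to your own test vector $v_k$ gives $\|P_{m_k/2}v_k\|_{Y_k}=\tfrac{1}{w_k}\cdot\tfrac{m_k}{2}=\tfrac k2$, while $\|v_k\|_{Y_k}=1$; hence the basis constant of the $k$-th block is at least $k/2$, and the partial-sum operators of the concatenated system are unbounded, so it is not a basis (your claim that it ``is then a normalized Schauder basis'' is false). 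The very vector that witnesses the failure of quasi-greediness also witnesses the failure of the basis property, because the bad set $\{1,\dots,m_k/2\}$ is an initial segment and your summing functional is symmetric in the coordinates, so no reordering of the block helps. Patching this by replacing $\tfrac1{w_k}|\sum_{i\le m_k} y_i|$ with $\tfrac1{w_k}\sup_{j}|\sum_{i\le j}y_i|$ (to restore monotonicity) destroys the counterexample, since then $\|v_k\|_{Y_k}=k/2$ as well.

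The paper resolves exactly this tension by putting the alternation into the functional rather than into the test vector: the norm is $\max\{\sup_i|x_i|,\,\sup_n\sup_j\tfrac{c_j}{2j}|\sum_{i\in S_j,\,i\le n}(-1)^i x_i|\}$, where $S_j$ consists of $2j$ consecutive integers. The inner $\sup_n$ over partial sums makes the canonical basis monotone, while for the constant vector $\bone_{S_j}$ the alternating partial sums stay bounded by $1$, so $\|\bone_{S_j}\|=1$; yet the projection onto the odd-indexed half $S_j'$ (which is not an initial segment) has norm $c_j/2$, killing quasi-greediness. Your quantitative bookkeeping for $k_{2N}\le c_N$ (splitting the supremum at the block size and choosing block lengths large relative to the inverse of $c$) is sound and closely parallels the paper's split between $j\le N$ and $j>N$; but the construction cannot be salvaged without redesigning the block norms along the above lines, at which point the argument essentially becomes the paper's.
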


\begin{proof}
Without loss of generality, we may assume $c_n \leq n$. Furthermore,
passing to the sequence
$$
c_j^\prime = \min\big\{c_j, \min_{i<j} c_i j/i \big\}
$$
if necessary, we may assume that the sequence $(c_j/j)$ is
non-increasing.

For $j \in \N$, let $S_j = \{5^j + 1, 5^j + 2, \ldots, 5^j + 2j
\}$. Define a norm on $c_{00}$ by setting, for $x = (x_i)$,
$$
\|x\| = \max \Big\{ \sup_i |x_i| , \sup_n \sup_j
 \frac{c_j}{2j} \big| \sum_{{i \in S_j}\atop{i\leq n}} (-1)^i x_i \big| \Big\} ,
$$
and let $\SX$ be the completion of $c_{00}$ in this norm. Denote
the canonical basis in $\SX$ by $(\be_i)$, which is clearly a
monotone basis.

Note that $(\be_i)$ is not unconditional with constant coefficients,
hence not quasi-greedy. Indeed, for $j \in \N$, let $S_j^\prime =
\{5^j + 1, 5^j + 3, \ldots, 5^j + 2j - 1 \}$. Then
$$
\big\| \sum_{i \in S_j} \be_i \big\| = 1 , \, \, {\mathrm{while}} \,
\, \big\| \sum_{i \in S_j^\prime} \be_i \big\| = \frac{c_j}{2} .
$$

It remains to show that $\|S_B x\| \leq c_N$ whenever $|B| \leq
2N$, and $\|x\| \leq 1$. Write $x = \sum_i x_i \be_i$, with
$\sup_i |x_i| \leq 1$. Let $A_j = S_j \cap B$. Then $\|S_B x\|
\leq \max\{1, C\}$, where
 \Bea
C & = & \sup_n\,\sup_j \frac{c_j}{2j} \big| \sum_{{i \in
A_j}\atop{i\leq n}} (-1)^i x_i \big| \\ & =& \sup_n\,\max \Big\{
\max_{j \leq N} \frac{c_j}{2j} \big| \sum_{{i \in A_j}\atop{i\leq
n}} (-1)^i x_i \big| ,\, \sup_{j > N} \frac{c_j}{2j} \big|
\sum_{{i \in A_j}\atop{i\leq n}} (-1)^i x_i \big| \Big\}
\\ & \leq & \max \Big\{ \max_{j \leq N} c_j ,\, \sup_{j
> N} \frac{Nc_j }{j} \Big\} \,=\, c_N . \Eea
\end{proof}

\section{Proof of Theorem \ref{th2new}}

%\begin{theorem}
%\label{th2} There exists $c>0$ such that for all $N,k=1,2,\ldots$
%\Be \big\|x-G_{N+k}(x)\big\|\,\leq\,c\,\Big(1+
%\frac{h_r(N)}{h_l(k)}\Big)\,\sigma_N(x),\quad \forall\;x\in\SX.
%\label{S1}\Ee
%\end{theorem}

We must show that \Be \big\|x-G_{N+k}(x)\big\|\,\leq\,c\,\Big(1+
\frac{h_r(N)}{h_l(k)}\Big)\,\sigma_N(x),\quad \forall\;x\in\SX.
\label{S1}\Ee
Observe that this quantifies how many iterations
of the greedy algorithm may be necessary to reach $\sigma_N(x)$.
As mentioned in $\S1$ estimates of this sort were obtained in
\cite{KT2,DKKT,Wo3,KaT}, with its roots going back to the work
 of Konyagin and Temlyakov \cite{KT}. Our proof is a suitable combination
 of these ideas, plus the argument we used in Theorem \ref{Th1}
to control the term $\|S_{\GA}(x)\|$.

\medskip

More precisely, take any $p\in\Sigma_N$, say with $\supp p\subset
P$ and $|P|=N$. We shall compare $G_{N+k}(x)$ with
$p+G_k(x-p)\in\Sigma_{N+k}$. Let $\Ga=\supp G_{N+k}(x)$ and notice
that $B=\supp G_k(x-p)$ can be chosen\footnote{Different choices
may appear in case of ties in the size of coefficients.} such that
$B\setminus P\subset\Ga$.

%\ \reg{Yes, with constant depending on $K$, and proof similar to
%Thm1. Actually the same comparison may hold for \emph{weak}-TGA. I wouldn-t write this in the paper...
%} {\color{red} \texttt {Question: Suppose you choose the TGA of
%order $N$ of $x$ as $S_{B_1}(x)$ for some $|B_1|=N$ and I choose
%the TGA of order $N$ of $x$ as $S_{B_2}(x)$ for some $|B_2|=N$
%with $B_2$ different than $B_1$. This is possible due to ties in
%the module of the coefficients. Is it true that we can compare $\|
%x - S_{B_1}(x)\|$ and $\| x - S_{B_2}(x)\|$ with a constant
%independent of N?}}

\

 Then \Beas
\big\|x-G_{N+k}(x)\big\| & = & \|x-S_\Ga x\|\\
& \leq & \|x-S_{P\cup B}(x)\|+\|S_{P\cup B}(x)-S_\Ga x\|\\
& \leq & \|x-S_{P\cup B}(x)\| + \|S_{(P\cup
B)\setminus\Ga}(x)\|+\|S_{\Ga\setminus(P\cup
B)}(x)\|=I_1+I_2+I_3.\Eeas The third term can be written as
\[
I_3=\|G_{|\Ga\setminus(P\cup B)|}(x-S_{P\cup
B}x)\|\leq\,K\,\|x-S_{P\cup B}(x)\|,
\]
so it suffices to estimate the first two terms.
\medskip

We begin with $I_2$. Since $B\setminus P\subset\Ga$, we have
$(P\cup B)\setminus\Ga=P\setminus\Ga$.
%Thus, proceeding as in the proof of Theorem \ref{Th1}, \[ I_2 \, =
%\, \|S_\PG(x)\|\,\lesssim\,\max_\PG|a_k(x)|\,h_r(|\PG|).\] Now using
%Lemma \ref{L1}\Beas \max_\PG|a_k(x)|& \leq & \min_\GP |a_k(x)|\,
%=\,\min_\GP
%\big|a_k\big(S_\GP(x-p)\big)\big|\\
%& \leq &
%a^*_{|\GP|}(x-p)\,=\,a^*_{|\GP|}\big(G_{|\GP|}(x-p)\big).\Eeas
%Thus \Beas I_2 &\lesssim &
%\frac{h_r(|\PG|)}{h_l(|\GP|)}\,\big\|G_{|\GP|}(x-p)\big\|
%\,\lesssim\,\frac{h_r(N)}{h_l(k)}\,\|x-p\|. \Eeas
%\medskip
%
%We now estimate $I_1$, following the approach in \cite{DKKT};
%namely, \Beas I_1 & = & \|x-S_{P\cup B}(x)\| \, = \,
%\|x-p\,-\,S_{P\cup
%B}(x-p)\|\\
%& \leq & \|x-p\,-\,S_{B}(x-p)\| +
%\|S_{\PB}(x-p)\|\,=\,J_{1}+J_{2}.\Eeas Clearly\[ J_{1} \,=\,
%\|x-p\,-\,G_k(x-p)\|\,\leq\,(1+K)\,\|x-p\|.\] On the other hand, arguing
%as before we have \Beas J_{2} & \lesssim &
%\max_{\PB}\big|a_j(x-p)\big|\,h_r(|\PB|)\, \leq \,
%\min_{B}\big|a_j(x-p)\big|\,h_r(|\PB|)\\ & \lesssim &
%\frac{h_r(|\PB|)}{h_l(|B|)}\,\big\|G_{|B|}(x-p)\big\|
%\,\lesssim\,\frac{h_r(N)}{h_l(k)}\,\|x-p\|. \Eeas Thus, putting
%together all cases we obtain
%\[
%\big\|x-G_{N+k}(x)\big\|\,\lesssim\,\Big(1+
%\frac{h_r(N)}{h_l(k)}\Big)\,\|x-p\|,\quad \forall\;p\in\Sigma_N,
%\]
%which establishes \eqref{S1}.
 Use \eqref{dil} and the definition of $h_r$
to obtain
\[ I_2 \, = \,
\|S_\PG(x)\|\,\leq c_2\,\max_\PG|a_k(x)|\,\|\sum_{\PG} \be_k \| \leq
c_2\,\max_\PG|a_k(x)|\,h_r(|\PG|).\] Now using Lemma \ref{L1}\Beas
\max_\PG|a_k(x)|& \leq & \min_\GP |a_k(x)|\, =\,\min_\GP
\big|a_k\big(S_\GP(x-p)\big)\big|\\
& \leq & a^*_{|\GP|}(x-p)\,=\min_{\GP} |a_k (x-p)|\,.\Eeas Thus,
by \eqref{dil} again
 %\Beas \min_\GP|a_k(x-p)|& \leq &
%\frac{1}{c_1}\frac{\|S_{\GP} (x-p)\|} {\| \sum_{\GP} \be_k\|}\, =
%\frac{1}{c_1}\frac{\|G_{|\GP|} (x-p)\|}
%{\| \sum_{\GP} \be_k\|}\,\\
%& \leq & \frac{K}{c_1}\,\frac{\|x-p\|}{h_l(|\GP|)}\,.\Eeas
\[ \min_\GP|a_k(x-p)| \leq  \frac{1}{c_1}\frac{\|S_{\GP} (x-p)\|}
{\| \sum_{\GP} \be_k\|}\, = \frac{1}{c_1}\frac{\|G_{|\GP|}
(x-p)\|} {\| \sum_{\GP} \be_k\|}\, \leq
\frac{K}{c_1}\,\frac{\|x-p\|}{h_l(|\GP|)}\,.\] Combining these
inequalities we obtain
 \Beas I_2 &\leq & \frac{c_2}{c_1}\, K\,
\frac{h_r(|\PG|)}{h_l(|\GP|)}\,\|x-p\|. \Eeas Observe that since
the basis is quasi-greedy, if $A \subset B$ we have $\|\bone_A \|
\leq K \|\bone_B\|\,.$  Hence, $h_r(|\PG\|) \leq K h_r(N)$ since
$|\PG| \leq N\,.$ Similarly, $h_l(|\GP|) \geq \frac{1}{K} h_l(k)$
since $k \leq |\GP|\,.$  Thus, $$I_2 \leq
\frac{c_2}{c_1}\,K^3\,\frac{h_r(N)}{h_l(k)}\,\|x-p\|\,.$$

\medskip

We now estimate $I_1$, following the approach in \cite{DKKT};
namely, \Beas I_1 & = & \|x-S_{P\cup B}(x)\| \, = \,
\|x-p\,-\,S_{P\cup
B}(x-p)\|\\
& \leq & \|x-p\,-\,S_{B}(x-p)\| +
\|S_{\PB}(x-p)\|\,=\,J_{1}+J_{2}.\Eeas Clearly\[ J_{1} \,=\,
\|x-p\,-\,G_k(x-p)\|\,\leq\,(1+K)\,\|x-p\|.\] To estimate $J_2$
use \eqref{dil} and the quasi-greediness of the basis to obtain
\Beas J_{2} & \leq & c_2
\,\max_{\PB}\big|a_j(x-p)\big|\,h_r(|\PB|)\, \leq c_2 \,
\min_{B}\big|a_j(x-p)\big|\,h_r(|\PB|)\\ & \leq & \frac{c_2}{c_1}
\frac{h_r(|\PB|)}{h_l(|B|)}\,\big\|G_{|B|}(x-p)\big\|
\,\leq\,\frac{c_2}{c_1}\,K\,\frac{h_r(|\PB|)}{h_l(k)}\,\|x-p\|.
\Eeas  As before,
$h_r(|\PB|) \leq K\, h_r(N)$, so we deduce $$ J_2 \leq
\,\frac{c_2}{c_1}\,K^2\,\frac{h_r(N)}{h_l(k)}\,\|x-p\|.$$ Thus,
putting together all cases we obtain
\[
\big\|x-G_{N+k}(x)\big\|\,\leq\,c\,\Big(1+
\frac{h_r(N)}{h_l(k)}\Big)\,\|x-p\|,\quad \forall\;p\in\Sigma_N,
\]
with the constant $c$ of the order $K^3c_2/c_1 =O(K^6)$.

\medskip

\sline\emph{Remarks:}

 \Bi

\item[$\bullet$] As pointed out in \cite{KaT}, \eqref{S1} improves
over \eqref{1} in some situations. For instance, assume
$h_l(N)=N^\al$ and $h_r(N)=N^\beta$ with $0<\al<\beta\leq1$. If $x$
is such that $\sigma_N(x)=O(N^{-r})$ then \eqref{1} gives
\[
\big\|x-G_M(x)\big\|\,\lesssim\,M^{\beta-\al}\,\sigma_M(x)\,\lesssim\,M^{-[r-(\beta-\al)]}
\]
while \eqref{S1} gives, when $M=k+N$ with $k\approx M\approx
N^{\beta/\al}$,
\[
\big\|x-G_M(x)\big\|\,\lesssim\,\sigma_{cM^{\al/\beta}}(x)\,\lesssim\,M^{-r\al/\beta}.
\]
When $r<\beta$, the second estimate improves over the first (for
large $M$). In the language of approximation spaces (see e.g. \cite{GHN}), these estimates
can also be read as
\[
A^r_\infty(\SX)\,\hookrightarrow\,
\mathscr{G}_{\infty}^{\max\{\frac{r\alpha}\beta,\,r-(\beta-\al)\}}(\SX).
\]

\item[$\bullet$] The estimate \eqref{S1} is only interesting when
$\lim_{k\to\infty}h_l(k)=\infty$ (so that $h_l(k)$ can reach
$h_r(N)$), and cannot be improved when $h_l$ is just bounded and
$h_r(N)\to\infty$. To see the latter, arguing as in Lemma \ref{3.2} one can find disjoint sets $A,B$ with $|A|=N+k$, $|B|=N$ and
$\|\bone_B\|/\|\bone_A\|\gtrsim h_r(N)$. Setting
$x=2\bone_A+\bone_B$ one sees that $\|x-G_{N+k}(x)\|/\sigma_N(x)\geq
\|\bone_B\|/\|\bone_A\|\gtrsim h_r(N)\to\infty$.

 \Ei

\section{Some questions} \label{questions}

Quasi-greedy bases  in $L^p(\mathbb T^d)$ were studied in
\cite{TYY2011a,TYY2011b}. In these cases one always has $\mu(N)
\lesssim N^{|\frac{1}{p} - \frac{1}{2}|}$, from the type and
cotype properties of $L^p(\mathbb T^d)$, $1 < p < \infty$. Hence,
using Theorem \ref{Th1} (and \eqref{kNlogN}) one obtains that
$C_N\lesssim N^{|\frac{1}{p} - \frac{1}{2}|}\,,$ when $p\not=2$, a
result which was proved in \cite{TYY2011b}.  When $p=2$, this
argument only gives $C_N \lesssim \log N$, a result which goes
back to \cite{Wo}.

\bline {\sc Question 1.} (Asked in \cite{TYY2011a,DST}).
\emph{Investigate whether, for quasi-greedy bases in a Hilbert
space, the inequality $C_N\lesssim \log N$ can be replaced by a
slower growing factor.}

\bline Recently, P. Wojtaszczyk \cite{Wo4} has showed us that, for
quasi-greedy bases in $L^2$, say with constant $K$, there exists
$\al=\al(K)<1$, such that $C_N \lesssim (\log N)^\alpha\,$. Also,
it can be deduced from the results in \cite{DST} that $C_N\lesssim
(\log N)^{1/2}$ for all quasi-greedy \emph{besselian}\footnote{Here besselian means
$\|(a_k)\|_{\ell^2}\lesssim \|\sum_k a_k\be_k\|_2$, for all
finitely scalars $(a_k)$.} bases in $L^2$. However, no examples
where these bounds are attained seem to be known.

\bline Consider now the trigonometric system $\mathcal T^d =
\{e^{ikx} : k\in \mathbb Z^d\}$ in $L^p(\mathbb T^d)$, $ 1\!
\leq\! p \!\leq \!\infty$ (understood as $C(\mathbb T^d)$ for
$p=\infty$). Notice that $\mathcal{T}^d$ is not quasi-greedy in
$L^p$, $p\not=2$. It was proved in \cite[Theorem 2.1]{Tem1998}
that one also has
$$
  C_N = C_N(\mathcal T^d,L^p(\mathbb T^d)) \,\lesssim \,N^{|\frac{1}{p} -
   \frac{1}{2}|}\,, \quad 1\leq p \leq \infty\,.
$$

\vspace{.3cm}

{\sc Question 2.} (Asked by V. N. Temlyakov at the
\textit{Concentration week on greedy algorithms in Banach spaces
and compressed sensing} held on July 18-22, 2011 at Texas A\&M
University.)

a) Characterize those systems $\mathcal B$ in $L^p(\mathbb T^d)$,
$1\leq p \leq \infty\,,$ such that \[C_N(\mathcal B,L^p(\mathbb
T^d)) \lesssim N^{|\frac{1}{p} - \frac{1}{2}|}\,,\quad
N=1,2,\ldots\] Notice that if $1 < p \neq 2  < \infty\,,$ the
characterization must be satisfied by $\mathcal T^d$ as well as
any quasi-greedy basis.

More generally, let $v(N)$ be an increasing function of $N$.

b) Characterize, in a Banach space $\SX$, those systems $\mathcal
B$  (not necessarily quasi-greedy) for which $ C_N(\mathcal B,
\SX) \lesssim v(N)\,.$

\medskip

\section{Appendix: proof of \eqref{dil}} \label{appendix}

 The proof suggested in \cite{DKKT} for the inequalities in \eqref{dil}
 is only valid for \emph{real} scalars $a_k\in\SR$;
 we give below a minor modification of their argument that
 establishes \eqref{dil} also for \emph{complex} scalars $a_k$.
 Below $K$ denotes the quasi-greedy constant in $\SX$.

The first two lemmas are similar to \cite[Prop 2]{Wo}.
 \begin{lemma}
 \label{lap.1} Let $\{\be_j\}_{j=1}^\infty$ is a  quasi-greedy basis in a
Banach space $\SX$.
 For all $\beta_j\in\SC$ with $|\beta_j|=1$, and
 all finite sets $A_1\subset A$, it holds\Be
 \big\|\sum_{j\in A_1}\beta_j\be_j\big\|\leq K\,\big\|
 \sum_{j\in A}\beta_j\be_j\big\|. \label{ap1}\Ee
 \end{lemma}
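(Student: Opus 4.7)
The plan is to reduce Lemma \ref{lap.1} directly to the defining inequality \eqref{qg-constant} of the quasi-greedy constant, by choosing an appropriate greedy ordering. Set $x = \sum_{j\in A}\beta_j\be_j$, whose basis coefficients are $\beta_j$ (of modulus $1$) for $j\in A$ and $0$ otherwise; the key fact that makes the argument work is that all non-zero coefficients of $x$ have the \emph{same} modulus, so any enumeration of $A$ produces a valid greedy ordering.

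First I would construct a bijection $\pi:\SN\to\SN$ whose first $|A_1|$ values list the elements of $A_1$, whose next $|A\setminus A_1|$ values list the elements of $A\setminus A_1$, and whose remaining values enumerate $\SN\setminus A$ in any order. Since $|a_{\pi(k)}(x)|=1$ for $k\leq|A|$ and $|a_{\pi(k)}(x)|=0$ for $k>|A|$, the monotonicity condition \eqref{greedy} holds trivially for this $\pi$; in particular $\pi$ is admissible in the defining bound for $K$.

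Then by construction $G_{|A_1|}^{\pi}(x) = \sum_{j\in A_1}\beta_j\be_j$, and applying \eqref{qg-constant} with this permutation yields
\[
\bignorm{\sum_{j\in A_1}\beta_j\be_j} \,=\, \|G_{|A_1|}^{\pi}(x)\| \,\leq\, K\,\|x\| \,=\, K\,\bignorm{\sum_{j\in A}\beta_j\be_j},
\]
which is precisely the claim. The only subtle point—and the one that justifies the whole approach—is that $K$ is defined in $\S1$ to control $G_N^{\pi}$ uniformly over \emph{every} permutation $\pi$ satisfying \eqref{greedy}; this is what allows us to break ties in coefficient moduli in our favor and promote $A_1$ to the head of the enumeration. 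There is no real obstacle beyond exploiting this observation.
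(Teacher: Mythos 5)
Your proof is correct, and it reaches the same inequality by a slightly different mechanism than the paper. Both arguments come down to applying the uniform bound \eqref{qg-constant} to make $G_{|A_1|}$ pick out exactly the set $A_1$ from a vector supported on $A$; the difference lies in how that selection is forced. You exploit the tie-breaking freedom built into the definition of $K$ in $\S1$ (the least $\tK$ for which \eqref{1.4} holds for \emph{every} permutation satisfying \eqref{greedy}), so you may simply enumerate $A_1$ first since all coefficients of $x=\sum_{j\in A}\beta_j\be_j$ have equal modulus. The paper instead perturbs the coefficients on $A_1$ to $(1+\e)\beta_j$, so that \emph{any} admissible greedy ordering must exhaust $A_1$ before touching $A\setminus A_1$, and then lets $\e\to0$. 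Your route is shorter and avoids the limiting argument, but it leans on the specific convention that $K$ controls all greedy orderings; the $(1+\e)$-perturbation is more robust, since it yields the same conclusion even under a definition of the quasi-greedy constant tied to a single fixed tie-breaking rule. Given the definition actually adopted in this paper, your argument is complete as written.
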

\begin{proof}
Call $A_2=A\setminus A_1$. For $\e>0$, define $x=\sum_{j\in
A_1}(1+\e)\beta_j\be_j+\sum_{j\in A_2}\beta_j\be_j$. Then \[
\|G_{|A_1|}(x)\|=(1+\e)\big\|\sum_{j\in A_1}\beta_j\be_j\big\|\leq
K\|x\|\,=\,\big\|(1+\e)\sum_{j\in A_1}\beta_j\be_j+
 \sum_{j\in A_2}\beta_j\be_j\big\|.\]
 Letting  $\e\to0$ we obtain \eqref{ap1}.
\end{proof}

\begin{lemma}
 \label{lap.2} Let $\{\be_j\}_{j=1}^\infty$ is a  quasi-greedy basis in a
Banach space $\SX$.
 For all $\e_j\in\{\pm1,\pm i\}$, and all finite
sets $A$ it holds\Be \tfrac1{4K}\,\big\|\sum_{j\in A}\be_j\big\|\leq
\big\|\sum_{j\in A}\e_j\be_j\big\|\leq \,4K\,\big\|\sum_{j\in
A}\be_j\big\|. \label{ap2}\Ee
 \end{lemma}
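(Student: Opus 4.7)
My plan is to reduce to Lemma \ref{lap.1} by partitioning $A$ into four disjoint subsets according to the value of $\e_j$: write $A = A_1 \cup A_2 \cup A_3 \cup A_4$, where on $A_k$ the scalar $\e_j$ takes the constant value $\e^{(k)} \in \{+1,-1,+i,-i\}$ respectively. Under this partition, both sums of interest decompose into the four blocks $\sum_{j\in A_k}\be_j$, and it suffices to control each block by the global sum (in either direction) using the unimodular restriction property of Lemma \ref{lap.1}.

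For the upper bound, I apply Lemma \ref{lap.1} with the trivial unimodular coefficients $\beta_j \equiv 1$ on $A$ and $A_1 = A_k \subset A$, giving $\|\sum_{j\in A_k}\be_j\| \leq K \|\sum_{j\in A}\be_j\|$ for each $k=1,2,3,4$. Since
\[
\sum_{j\in A}\e_j\be_j \;=\; \sum_{k=1}^4 \e^{(k)} \sum_{j\in A_k}\be_j,
\]
the triangle inequality (together with $|\e^{(k)}|=1$) yields $\|\sum_{j\in A}\e_j\be_j\| \leq 4K \|\sum_{j\in A}\be_j\|$, which is the right-hand inequality of \eqref{ap2}.

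For the lower bound I reverse the roles: now the coefficients $\e_j$ themselves are unimodular on $A$, so Lemma \ref{lap.1} applied to $\sum_{j\in A}\e_j\be_j$ and $A_1 = A_k$ gives $\|\sum_{j\in A_k}\e_j\be_j\| \leq K \|\sum_{j\in A}\e_j\be_j\|$. But $\e_j \equiv \e^{(k)}$ on $A_k$, so $\|\sum_{j\in A_k}\e_j\be_j\| = \|\sum_{j\in A_k}\be_j\|$. Writing $\sum_{j\in A}\be_j = \sum_{k=1}^4 \sum_{j\in A_k}\be_j$ and using the triangle inequality then gives $\|\sum_{j\in A}\be_j\| \leq 4K \|\sum_{j\in A}\e_j\be_j\|$, which is the left-hand inequality of \eqref{ap2}.

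There is no serious obstacle here: the argument is just the real-scalar proof of \cite{DKKT} (which uses a $2$-way partition into positive and negative signs) adapted to the four-valued sign set $\{\pm 1,\pm i\}$ with a $4$-way partition. The only subtle point is to notice that Lemma \ref{lap.1} is naturally symmetric in this situation — it can be applied with either $\beta_j \equiv 1$ or $\beta_j = \e_j$ as the ambient unimodular vector — because in both cases the block restriction produces a scalar multiple of $\sum_{A_k}\be_j$. Both directions therefore yield the same factor $4K$.
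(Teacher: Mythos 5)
Your proof is correct and is essentially identical to the paper's argument: the paper also partitions $A$ into the four sets $A_k=\{j\in A:\e_j=i^k\}$, uses the triangle inequality, and applies Lemma \ref{lap.1} with $\beta_j\equiv 1$ for the right-hand inequality and with $\beta_j=\e_j$ for the left-hand one. No issues.
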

\begin{proof}
Call  $A_k=\{j\in A\mid \e_j=i^k\}$, $k=1,\ldots,4$. Then, the
triangle inequality and \eqref{ap1} (with all $\beta_j=1$) give \[
\big\|\sum_{j\in A}\e_j\be_j\big\|\leq
\,\sum_{k=1}^4\,\big\|\sum_{j\in A_k}\be_j\big\|\,\leq\,
4K\,\big\|\sum_{j\in A}\be_j\big\|,\] establishing the right hand
side of \eqref{ap2}. Arguing similarly,
\[
\big\|\sum_{j\in A}\be_j\big\|\leq \,\sum_{k=1}^4\,\big\|\sum_{j\in
A_k}\be_j\big\|\,\leq\, 4K\,\big\|\sum_{j\in A}\e_j\be_j\big\|
\]
where we have now used \eqref{ap1} with $\beta_j=\e_j$.
\end{proof}

\begin{lemma}
 \label{lap.3}
 For all complex $\beta=a+ib$ with $|a|+|b|\leq 1$, and for all $x,y\in\SX$ it holds
 \Be  \big\|x+\beta y\big\|\leq
\,\max\big\{\|x\pm y\|,\|x\pm i y\|\big\}. \label{ap3}\Ee
 \end{lemma}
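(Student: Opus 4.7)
The plan is to exhibit $x+\beta y$ as a convex combination of the four vectors $x\pm y$ and $x\pm iy$, after which the triangle inequality immediately yields the claim. Writing $\beta = a+ib$ with $a,b\in\SR$ and $|a|+|b|\leq 1$, I will look for non-negative scalars $\lambda_1,\lambda_2,\lambda_3,\lambda_4$ summing to $1$ such that
\[
x+\beta y \,=\, \lambda_1(x+y)+\lambda_2(x-y)+\lambda_3(x+iy)+\lambda_4(x-iy).
\]
Expanding the right-hand side and matching the coefficients of $x$, $y$, and $iy$ reduces this to the three linear equations $\sum_i\lambda_i=1$, $\lambda_1-\lambda_2=a$, and $\lambda_3-\lambda_4=b$.

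To solve this system I will set $s:=(1-|a|-|b|)/4$, which is non-negative by hypothesis, and define $\lambda_1:=s+a^+$, $\lambda_2:=s+a^-$, $\lambda_3:=s+b^+$, $\lambda_4:=s+b^-$, where $a^\pm:=\max\{\pm a,0\}$ (and similarly for $b^\pm$). A direct check shows that every $\lambda_i$ is non-negative, that $\lambda_1-\lambda_2=a^+-a^-=a$ and likewise $\lambda_3-\lambda_4=b$, and that $\sum_i\lambda_i=4s+|a|+|b|=1$. Applying the triangle inequality to the convex representation and bounding each of the four norms by their maximum, one obtains
\[
\|x+\beta y\|\,\leq\,\Big(\sum_{i=1}^4\lambda_i\Big)\,\max\bigl\{\|x\pm y\|,\|x\pm iy\|\bigr\}\,=\,\max\bigl\{\|x\pm y\|,\|x\pm iy\|\bigr\},
\]
which is exactly \eqref{ap3}.

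The geometric content behind this argument is that the convex hull of $\{\pm 1,\pm i\}$ in $\SC$ is precisely the $\ell^1$-ball $\{a+ib : |a|+|b|\leq 1\}$, which matches the hypothesis on $\beta$; hence the existence of such weights $\lambda_i$ is guaranteed a priori. For this reason no substantial obstacle arises in the proof, and the only care required is in verifying the non-negativity of the $\lambda_i$, which is ensured by the choice $s\geq 0$.
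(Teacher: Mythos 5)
Your proof is correct and rests on the same idea as the paper's: since $|a|+|b|\leq 1$, the scalar $\beta$ lies in the convex hull of $\{\pm 1,\pm i\}$, so $x+\beta y$ is a convex combination of $x\pm y$ and $x\pm iy$ and the triangle inequality finishes. The paper organizes this as a two-step split, first $\|x+\beta y\|\leq a\|x+y\|+(1-a)\|x+i\gamma y\|$ with $\gamma=b/(1-a)$ and then a convex combination of $x\pm iy$, whereas you exhibit the four weights explicitly in one step; your computation of the $\lambda_i$ checks out, so both arguments are equally valid.
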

\begin{proof}
We may assume that $a\in[0,1)$. Then \Bea \|x+\beta y\| & \leq &
\|ax + ay\|+\|(1-a)x+iby\|\nonumber\\ & = & a\|x+y\|+(1-a)\|x+i\ga
y\|, \label{apaux1}\Eea where we have set $\ga=b/(1-a)$, which is
a real number with $|\ga|\leq1$. Now \Beas\|x+i\ga y\| & = &
\big\|\tfrac{1-\ga}2(x-iy)\,+\,\tfrac{1+\ga}2(x+iy)\big\|
\\
& \leq
&\tfrac{1-\ga}2\big\|x-iy\big\|\,+\,\tfrac{1+\ga}2\big\|x+iy\big\|\,\leq\,\max\,\big\|x\pm
iy\big\|,\Eeas where we have used that $-1\leq\ga\leq1$. Inserting
this into \eqref{apaux1} easily leads to \eqref{ap3}.
\end{proof}

We now justify the right hand bound in \eqref{dil}. For a complex
number $\al=a+ib$ we shall denote $|\al|_1=|a|+|b|$. Then,
iterating the previous lemma we obtain \Bea \big\|\sum_{j\in
A}\al_j\be_j\big\| & \leq & \max_{j\in
A}|\al_j|_1\,\max_{\e_j\in\{\pm1,\pm
i\}}\,\big\|\sum_{j\in A}\e_j\be_j\big\|\nonumber\\
& \leq & 4\,\sqrt 2\,K\,\max_{j\in A}|\al_j|\,\big\|\sum_{j\in
A}\be_j\big\|,\label{dilr}\Eea where in the last step we have used
Lemma \ref{lap.2} and the trivial estimate $|\al|_1\leq \sqrt2
|\al|$.

We can now state a slightly more general version of Lemma
\ref{lap.2}.

\begin{lemma}
 \label{lap.4} Let $\{\be_j\}_{j=1}^\infty$ is a  quasi-greedy basis in a
Banach space $\SX$.
 For all $\e_j\in\SC$ with $|\e_j|=1$, and all finite
sets $A$ it holds\Be \tfrac1{4\sqrt2\,K}\,\big\|\sum_{j\in
A}\be_j\big\|\leq \big\|\sum_{j\in A}\e_j\be_j\big\|\leq
\,4\sqrt2\,K\,\big\|\sum_{j\in A}\be_j\big\|. \label{ap4}\Ee
 \end{lemma}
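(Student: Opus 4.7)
My plan is to deduce both inequalities in \eqref{ap4} from \eqref{dilr}, exploiting a rotational symmetry of the setup. The upper bound is immediate: I would apply \eqref{dilr} with $\al_j=\e_j$, and use $\max_{j\in A}|\e_j|=1$, to obtain
\[
\big\|\sum_{j\in A}\e_j\be_j\big\|\,\leq\,4\sqrt{2}\,K\,\big\|\sum_{j\in A}\be_j\big\|.
\]

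For the lower bound, the plan is to apply the same machinery to a ``rotated basis''. First I would verify that the system $\{\e_j\be_j\}_{j=1}^\infty$ is itself a normalized quasi-greedy basis of $\SX$, with the \emph{same} quasi-greedy constant $K$. Indeed, every $x=\sum_j a_j\be_j$ also admits the expansion $x=\sum_j(a_j/\e_j)(\e_j\be_j)$, and since $|a_j/\e_j|=|a_j|$, the greedy permutation in \eqref{greedy} coincides for the two bases and the partial sums $G_N(x)$ produce identical elements of $\SX$. Thus \eqref{qg-constant} holds for $\{\e_j\be_j\}$ with the same constant $K$.

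Once this is in place, every ingredient used to derive \eqref{dilr}, namely Lemma \ref{lap.1} (which only invokes \eqref{qg-constant}), Lemma \ref{lap.2} (built from Lemma \ref{lap.1}), and the purely algebraic Lemma \ref{lap.3}, carries over verbatim with $\e_j\be_j$ in place of $\be_j$. This gives, for arbitrary complex scalars $\al_j$,
\[
\big\|\sum_{j\in A}\al_j\,\e_j\be_j\big\|\,\leq\,4\sqrt{2}\,K\,\max_{j\in A}|\al_j|\,\big\|\sum_{j\in A}\e_j\be_j\big\|.
\]
Choosing $\al_j=\bar\e_j$, so that $\al_j\e_j=1$ and $|\al_j|=1$, I obtain the desired
\[
\big\|\sum_{j\in A}\be_j\big\|\,\leq\,4\sqrt{2}\,K\,\big\|\sum_{j\in A}\e_j\be_j\big\|.
\]

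The only point requiring mild care is the claim that multiplying basis vectors by unimodular phases preserves the quasi-greedy constant; this is essentially bookkeeping, since $G_N$ depends only on the moduli of the coefficients. With that routine verification in hand, no further obstacle arises.
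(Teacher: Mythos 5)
Your proposal is correct and follows essentially the same route as the paper: the upper bound is the special case $\al_j=\e_j$ of \eqref{dilr}, and the lower bound is obtained by applying \eqref{dilr} to the rotated quasi-greedy system $\{\e_j\be_j\}$ (which has the same constant $K$) with coefficients $\al_j=\bar\e_j$. Your extra remarks justifying that the phase rotation preserves the quasi-greedy constant only make explicit what the paper leaves implicit.
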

\begin{proof}
The right hand side is a special case of \eqref{dilr}. To obtain the
left hand side, we consider the system $\{\tbe_j:=\e_j\be_j\}$,
which is also a quasi-greedy basis in $\SX$ with the same constant
$K$. Thus, \eqref{dilr} for this system (with $\al_j=\bar{\e}_j$)
gives
\[
\big\|\sum_{j\in A}\bar{\e}_j\tbe_j\big\|\leq
\,4\sqrt2\,K\,\big\|\sum_{j\in A}\tbe_j\big\|,
\]
but this is the same as the left hand side of \eqref{ap4}.
\end{proof}

We turn now to the left hand inequality in \eqref{dil}, for which we
follow the arguments in \cite[p. 579]{DKKT}. We shall prove that, if
$A$ is finite then\Be \big\|\sum_{j\in A}\alpha_j\be_j\big\|\geq
\,\tfrac1{8\sqrt2\,K^2}\,\min_{j\in A}|\al_j|\,\big\|\sum_{j\in
A}\be_j\big\|.\label{dill} \Ee Write each scalar
$\al_j=\e_j|\al_j|$, with $\e_j\in\SC$ such that $|\e_j|=1$, and
consider a permutation $\{j_1,\ldots,j_N\}$ of $A$ such that
$|\al_{j_1}|\geq |\al_{j_2}|\geq\ldots\geq|\al_{j_N}|$. Let
$x=\sum_{j\in A}\alpha_j\be_j$ and set $G_0(x)=0$. Then \Bea
 |\al_{j_N}|\,\big\|\sum_{\ell=1}^N\e_{j_\ell}\be_{j_\ell}\big\| &
 = &
 |\al_{j_N}|\,\Big\|\sum_{\ell=1}^N\tfrac1{|\al_{j_\ell}|}\,\big(G_\ell(x)-G_{\ell-1}(x)\big)\Big\|\label{apaux2}\\
&
 = &
 |\al_{j_N}|\,\Big\|\sum_{\ell=1}^{N-1}\Big(\tfrac1{|\al_{j_\ell}|}-\tfrac1{|\al_{j_{\ell+1}}|}\Big)G_\ell(x)+
 \tfrac1{|\al_{j_N}|}G_{N}(x)\Big\|\nonumber\\
&
 \leq &
 |\al_{j_N}|\,\Big[\tfrac1{|\al_{j_N}|}\,+\,
 \sum_{\ell=1}^{N-1}\big(\tfrac1{|\al_{j_{\ell+1}}|}-\tfrac1{|\al_{j_{\ell}}|}\big)\Big]\,K\,\big\|x\big\|\,
 \leq\, 2K\,\big\|x\big\|.\nonumber\Eea
On the other hand, by Lemma \ref{lap.4}, the expression on the left
of \eqref{apaux2} can be estimated from below by
$|\al_{j_N}|\,\|\sum_{j\in A}\be_j\|/{4\sqrt2\,K}$, from which
\eqref{dill} follows.

Thus, putting together \eqref{dilr} and \eqref{dill} we have shown

\begin{proposition}
\label{p1.ap} Let $\{\be_j\}_{j=1}^\infty$ is a  quasi-greedy basis
in a Banach space $\SX$. If $A$ is finite and $\al_j\in\SC$ then\[
\tfrac1{8\sqrt2\,K^2}\,\min_{j\in A}|\al_j|\,\big\|\sum_{j\in
A}\be_j\big\|\,\leq\,\big\|\sum_{j\in
A}\alpha_j\be_j\big\|\,\leq\,4\,\sqrt 2\,K\,\max_{j\in
A}|\al_j|\,\big\|\sum_{j\in A}\be_j\big\|.
\]\end{proposition}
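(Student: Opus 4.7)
The plan is to establish the two inequalities of Proposition \ref{p1.ap} separately, i.e., the upper bound \eqref{dilr} and the lower bound \eqref{dill}, once the four preparatory lemmas (\ref{lap.1}--\ref{lap.4}) have been proved. The overall strategy is to transfer the real-scalar argument of \cite{DKKT} to the complex setting by inserting an extra reduction step that accounts for the additional orientation freedom in $\mathbb{C}$.

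For the upper bound, the natural approach is a telescoping/iterative application of Lemma \ref{lap.3}. Concretely, I would normalize so that $M:=\max_{j\in A}|\alpha_j|_1\le1$, where $|\alpha|_1:=|\mathrm{Re}\,\alpha|+|\mathrm{Im}\,\alpha|$, and then apply Lemma \ref{lap.3} coordinate by coordinate to replace each scalar $\alpha_j$ by some $\e_j\in\{\pm1,\pm i\}$, losing only a factor of the maximum. This reduces the problem to estimating $\|\sum_{j\in A}\e_j\be_j\|$ with $\e_j\in\{\pm1,\pm i\}$, which Lemma \ref{lap.2} bounds by $4K\|\sum_{j\in A}\be_j\|$. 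The conversion $|\alpha|_1\le\sqrt2|\alpha|$ produces the final constant $4\sqrt2\,K$.

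For the lower bound, I would factor $\alpha_j=\e_j|\alpha_j|$ with $|\e_j|=1$ and reorder the indices so that $|\alpha_{j_1}|\ge|\alpha_{j_2}|\ge\cdots\ge|\alpha_{j_N}|$. With $x=\sum_{j\in A}\alpha_j\be_j$ and $G_0(x)=0$, I would use summation by parts to write
\[
\sum_{\ell=1}^N \e_{j_\ell}\be_{j_\ell}\,=\,\sum_{\ell=1}^N \frac{1}{|\alpha_{j_\ell}|}\bigl(G_\ell(x)-G_{\ell-1}(x)\bigr)\,=\,\sum_{\ell=1}^{N-1}\Bigl(\tfrac1{|\alpha_{j_\ell}|}-\tfrac1{|\alpha_{j_{\ell+1}}|}\Bigr)G_\ell(x)+\tfrac1{|\alpha_{j_N}|}G_N(x).
\]
Since $(1/|\alpha_{j_\ell}|)$ is increasing in $\ell$, the differences have constant sign, and the triangle inequality together with the quasi-greedy estimate $\|G_\ell(x)\|\le K\|x\|$ gives, after telescoping, a bound by $(2K/|\alpha_{j_N}|)\|x\|$. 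Finally, Lemma \ref{lap.4} applied to the unit-modulus coefficients $\e_j$ shows that the left-hand side is at least $\|\sum_{j\in A}\be_j\|/(4\sqrt2\,K)$, which yields \eqref{dill} with the constant $8\sqrt2\,K^2$.

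The main obstacle is the complex-scalar reduction used in the upper bound: the real-scalar proof of \cite{DKKT} can simply average over two sign choices, but for $\mathbb{C}$ one needs Lemma \ref{lap.3} to write an arbitrary $\beta$ with $|\beta|_1\le 1$ as a convex combination of $\pm1,\pm i$ inside the norm. Once that technical step is in place, combining the two inequalities gives Proposition \ref{p1.ap} directly.
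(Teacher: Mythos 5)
Your proposal is correct and follows essentially the same route as the paper: the upper bound via iterating Lemma \ref{lap.3} to reduce to unimodular coefficients in $\{\pm1,\pm i\}$ and then invoking Lemma \ref{lap.2} together with $|\alpha|_1\le\sqrt2\,|\alpha|$, and the lower bound via the Abel-summation (telescoping) identity for $\sum_\ell\e_{j_\ell}\be_{j_\ell}$ combined with the quasi-greedy bound $\|G_\ell(x)\|\le K\|x\|$ and Lemma \ref{lap.4}. The constants you obtain, $4\sqrt2\,K$ and $8\sqrt2\,K^2$, coincide with those in the paper.
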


%%%%%% BIBLIOGRAPHY %%%%%%%
\bibliographystyle{plain}
%\bibliography{cone}

\vskip 1truemm

\end{document}